\documentclass[12pt]{amsart}
\pagestyle{plain}
\usepackage{graphicx}
\usepackage{amsmath}
\usepackage{amstext}
\usepackage{amsfonts}
\usepackage{amsthm}
\usepackage{amssymb}

\setlength{\evensidemargin}{0.25in}
\setlength{\oddsidemargin}{0.25in}
\setlength{\textwidth}{6in}
\parskip0.2em

\DeclareMathOperator{\image}{im}

\newtheorem{theorem}{Theorem}
\newtheorem{definition}{Definition}
\newtheorem{example}{Example}
\newtheorem{lemma}{Lemma}
\newtheorem{corollary}{Corollary}

\newcommand{\Ar}{\mathbb{R}}
\newcommand{\Be}{\frak{B}}

\newcommand{\CSD}{\mathcal{L}}

\newcommand{\Rest}{\mathcal{R}}
\newcommand{\Mod}{\mathcal{M}}
\newcommand{\Pert}{\frak{P}}

\newcommand{\Energy}{\mathcal{E}}

\newcommand{\ep}{\epsilon}
\newcommand{\lam}{\lambda}

\newcommand{\simp}{\vec{t}}

\newcommand{\Bord}{\Omega F}

\newcommand{\HF}{HF}

\newcommand{\term}[1]{\textbf{#1}}

\newcommand{\tet}{\theta}

\newcommand{\GF}{\mathcal{F}}
\newcommand{\C}{\mathbb{C}}

\begin{document}
\title{Geometric Cycles in Floer Theory}
\author{Max Lipyanskiy}
\date{}

\address{Simons Center for Geometry and Physics, Stony Brook University, Stony Brook, NY 11794 }
\email{mlipyan@gmail.com}
\maketitle
     
\section{Introduction}
\subsection{Historical Overview}

In the mid 80's, Andreas Floer obtained a positive solution to Arnold's conjecture on the minimal number of fixed points of
a Hamiltonian symplectomorphism.  For this purpose, Floer introduced a new homology theory for the loop space of a symplectic
manifold.  His theory is an  infinite dimensional version of Morse theory applied to the symplectic action functional on
the loop space.  The critical points of this functional correspond to fixed points of the symplectomorphism, while the boundary
operator counts the dimension zero moduli spaces of (perturbed) holomorphic curves connecting
the critical points \cite{Floer1}.  Under suitable hypothesis, Floer was able to show that the resulting homology theory is well defined
and independent of the perturbation data necessary to construct the theory.  Moreover, Floer showed that the homology theory
was isomorphic to the singular homology of the underlying symplectic manifold and thus proved the the Arnold conjecture.  In subsequent work, Floer generalized his theory to other contexts such as the more general problem of Lagrangian intersections as well as an analogous theory for the Chern-Simons invariant for connections on a 3-manifold.  In these cases, a simple topological interpretation of the resulting groups is not available.  The groups encode deep geometric information about the relevant configuration space which cannot be reduced to the "classical" topology of that space.  \\\\
From a foundational standpoint, the definition of the Floer homology groups is perhaps not satisfactory.  The relevant functionals are usually not Morse-Smale and thus have to be perturbed in some manner to even define the groups.  As a result, one has to then show that the groups are indeed independent of the chosen perturbation.  Furthermore, a rather delicate analysis of the compactification of the moduli space of trajectories is necessary to establish even the most basic properties of the theory;  for instance, the fact that the chain of groups generated by critical points indeed form a complex.  The situation is of course analogous to the finite dimensional story.  One may take as the definition of homology of a compact manifold the chain complex associated to a Morse-Smale function.  However, establishing even the basic properties, such as functoriality under mappings, is quite nontrivial.  On the other hand, with singular homology theory at one's disposal,
Morse homology becomes an effective and illuminating way of computing the homology groups.  The central goal of the present work is to find an appropriate analogue of singular homology in the Floer context.     It should be emphasized, however, that while in the finite dimensional situation singular homology provides a way of avoiding the analytic machinery that is necessary for setting up Floer's theory, the theory developed in the current work rests heavily on the use of Sobolev spaces and appropriate nonlinear Fredholm operators between them.  This is, perhaps, a reflection of the fact that although many results in Floer theory have purely topological
interpretations, ultimately the theory deals with the qualitative behavior of solutions to certain elliptic partial differential equations.    \\\\
Let's briefly describe, what is to our knowledge,  the earliest evidence for the existence of such a theory.  In the late '80s, Atiyah \cite{Ati} and others, observed that, from the point of view of relative Donaldson invariants,  one may view Floer's theory as a theory of "semi-infinite cycles". We retell his observation in the language of Seiberg-Witten theory.  Consider a closed Riemannian 4-manifold $X$ with a spin$^\C$-structure and spinor bundle $W$ (see section $\ref{swcase}$ for the full definitions). Let $\Mod(X)$ be the moduli space of
solutions to the SW equations modulo the gauge group action.  As is well known (see for example \cite{KM}), when $b^+>0$
the moduli space is a  smooth compact manifold.  Now, consider the case when $X=X_+ \sqcup_Y X_-$ decomposes along a 3-manifold
$Y$ into two compact 4-manifolds with boundary.  Let $\Be(Y)$ denote the configuration space of pairs $(B,\Psi)$, where $B$ is a Clifford connection and $\Psi$ is a section of the spinor bundle over $Y$, modulo the gauge group action.  We have restriction maps $$\Rest_{\pm}:\Mod(X_{\pm})\rightarrow \Be(Y)$$  At least on the point-set level, one has $$\Mod(X)=\Mod(X_+)\times_{\Be(Y)}\Mod(X_-)$$  In other words, modulo the action of the gauge group, solutions on $X$ correspond to solutions on $X_{\pm}$ that agree on the boundary.  Standard elliptic boundary value theory implies that $\Mod(X_{\pm})$ are in fact Hilbert manifolds.  Therefore, one might hope to interpret the fibre product $\Mod(X_+)\times_{\Be(Y)}\Mod(X_-)$ in the smooth category.  Speculating even further, one might hope for the existence of Floer groups $\HF^+(Y)$ and $\HF^-(Y)$ with an intersection pairing $$\HF^+(Y) \otimes \HF^-(Y)\rightarrow H_*(\Be(Y))$$ where $H_*$ denote the singular homology functor. Abstracting this situation, given a smooth map $$\sigma:P\rightarrow \Be(Y)$$ where $P$ is some Hilbert manifold, we are led to the following problems:\\\\
1.  What properties should such maps have to have finite dimensional intersections?\\
2.  What properties should such maps have to have compact intersections?\\\\
The answer to the first problem is well known and involves the notion of a polarized Hilbert manifold.  Very loosely, one may think of a polarization of a Hilbert manifold such as $\Be(Y)$ as an equivalence class of local splittings of the tangent bundle: $$T\Be(Y)=T^+\Be(Y)\oplus T^-\Be(Y)$$  As it turns out, $\Be(Y)$ comes with a natural choice of polarization for which we have $$\pi^- \circ D\Rest_+:T\Mod(X_+)\rightarrow T^-\Be(Y)$$ Fredholm and $$ \pi^+ \circ D\Rest_+:T\Mod(X_+)\rightarrow T^+\Be(Y)$$ compact.  Similarly, $$\pi^+ \circ D\Rest_-:T\Mod(X_-)\rightarrow T^+\Be(Y)$$  is Fredholm while $$\pi^- \circ D\Rest_-:T\Mod(X_-)\rightarrow T^-\Be(Y)$$ is compact.  The fact that $\Mod(X_+)\times_{\Be(Y)}\Mod(X_-)$ is finite dimensional is an immediate consequence of Fredholm theory.  Therefore, to ensure finite dimensional intersections it is reasonable to require our cycles to respect the polarization.   The resolution of the second problem is significantly more subtle and is the main subject of this work.  As far as we know, the first attempt to do so is due to Tom Mrowka and Peter Ozsvath and the present dissertation owes a considerable debt to their original insight.\\\\
\textbf{Acknowledgement.}  We wish to thank Tom Mrowka for supervising the author's thesis which is the basis of this work.  In addition, we thank Peter Kronheimer and Dennis Sullivan for useful conversations.  Finally, we would like to thank the Simons Center For Geometry and Physics for their hospitality while this work was being completed.

\subsection{An Outline of the Contents}
Here we briefly describe the contents and provide some motivation for the constructions that follow. \\\\
As the basic structure we will be considering a Hilbert space $\Be$ with a polarization $T\Be \cong T^+\Be\oplus T^-\Be$ as well as a functional $\CSD:\Be \rightarrow \mathbb{R}$.  In section $\ref{MainC}$ we lay out the axioms for a map $\sigma:P\rightarrow \Be$ from a Hilbert manifold $P$ to define a cycle.  The motivation comes from the strong $L^2_1$ proof of compactness for Seiberg-Witten moduli spaces as presented in \cite{KM}.   On a more technical note, we will define and use the notion of locally cubical (lc) manifolds.  It appears to be a useful and technically simple structure to work with.    \\\\
In section $\ref{Loop}$ we will discuss our main example coming from Symplectic Geometry.  The example concerns the space of loops in $\C^n$.  We prove
the required $L^2_1$-compactness theorem. 
In section $\ref{Pert}$ we construct a family of perturbations for cycles ensuring that intersections can be arranged to be transverse.  The construction is quite similar to perturbing manifolds in finite dimensions and also rests on the Sard-Smale theorem.  One must simply check that the perturbation does not take us out of the category of cycles.  It is perhaps worth remarking that unlike in traditional Floer theory, where perturbations involve changing the metric, complex structure or the hamiltonian function, the perturbations here do not alter the geometric data.  Therefore, our theory is defines for a wide class of functionals that can potentially be highly degenerate.  This is illustrated in our proof of the existence of periodic orbits for loops in $\C^n$.  In this case the functional is degenerate and yet we are able to extract the relevant geometric information.  \\\\
In section $\ref{Corr}$ we define a general class of maps (called correspondences) $Z\rightarrow \Be_0\times \Be_1$ that give rise to maps on the Floer groups via fibre products.  The definition is general enough to include not only moduli spaces on a cobordism $W$ but also the diagonal map $\Be \rightarrow \Be \times \Be$.   The definition is a little technical but is forced on us by the considerations that follow.\\\\
Section $\ref{Triv}$ is the technical heart of the theory. Our goal is to prove that the trivial cobordism induces the identity on the Floer groups.  This is established by finding a cobordism between the correspondence coming from the moduli space of solutions on the cylinder and the diagonal map.  If $\Mod_t$ denotes the solutions on a cylinder of length $t$, we can form the disjoint union $\sqcup_{t\in (0,1]}\Mod_t$.  We complete this manifold by adding the diagonal $\Be \rightarrow \Be\times \Be$ at $t=0$.  This is achieved by arguing that on a small cylinder, a solution is specified by the appropriate spectral projections to the boundary.  For this we need to apply the contraction mapping theorem and in view of the nonlinear terms need rather precise estimates.   This establishes that we have a Hilbert manifold that with boundary $\Mod_1 \sqcup \Be$. However, the restriction maps defined on the cylinder extend only weakly  to the diagonal map as we approach the $t=0$ boundary.  Given a cycle $P\rightarrow \Be$ we show that by changing coordinates, we can assume that the difference map $\Rest_0-\sigma: \sqcup_{t\in (0,1]}\Mod_t\times P\rightarrow \Be$ is $C^1$ up to the boundary.  Such a coordinate change preserves the lc-structure but not the smooth structure on  $ \sqcup_{t\in (0,1]}\Mod_t\times P\rightarrow \Be$. This is the principal motivation for introducing lc-manifolds.  \\\\
In section $\ref{Appl}$ we illustrate the general theory by reproving that for a general class of hamiltonian functions $H:\C^n \rightarrow \Ar$, there exists a nontrivial periodic orbit.  Superficially, the proof is similar to the one in \cite{Hof}.  However, our proof is based on the unregularized gradient flow and does not use minimax methods.  It gives a rather natural interpretation of the cycles appearing in the construction.   We end with a couple of technical appendices. 
\section{Main Construction}
\label{MainC}

\subsection{lc-Manifolds of Depth $\leq 1$}
In this work it will be important to work with Hilbert manifolds with corners and
 some rather weak smoothness between different strata.  In an appendix, we will introduce a rather technical notion of locally-cubical manifolds or lc-manifolds for short.   For the sake of the reader, in this section we simply write down the definitions for the simplest nontrivial case.  In the terminology of lc-manifolds, this is a depth
  one lc-manifold. For the purposes of defining the bordism groups this is sufficient and illustrates all the essential technical difficulties.  Therefore, we decided to first give the definition in this special case.  While many of the propositions will be stated for general lc-manifolds, on first reading one may simply restrict to the case described below.
\begin{definition}
An lc-manifold of depth one is a Hausdorff space $P$,  with a distinguished closed subset called its boundary $\partial P \subset  P$.
We assume both $\partial P$ and $P-\partial P$ are Hilbert manifolds.  Furthermore,  each point $p\in \partial P$
has a neighborhood $U\subset \partial P$ and an open embedding $f:U\times [0,\ep)\rightarrow P$
such $f(u,0)=u$ while  $f_{|U\times \{ 0\}}$ and $f_{|U\times (0,\ep)}$ are diffeomorphisms.
\end{definition}
Let us call such a map $f:U\times [0,\ep)\rightarrow P$ an lc-chart.  Let $P$ be an lc-manifold of depth one and $\Be$ some Hilbert manifold.
\begin{definition}
 A continuous map $\sigma: P\rightarrow \Be$ is lc-smooth if the following hold:\\
1. $\sigma$ is smooth on $\partial P$ and $P-\partial P$ \\
2. Each point $p\in \partial P$ has an lc-chart $U\times [0,\ep)$ such that, in the chart coordinates,
$\sigma$ along with its first derivative in the $U$ direction is continuous on $U\times [0,\ep)$.
\end{definition}

Given an lc-smooth map $\sigma:P\rightarrow \Be$, we denote by $D\sigma_p:TP\rightarrow T\Be$ the differential restricted to the open stratum on which $p$ lives.
\subsection{Floer Spaces}
We will assume all our Hilbert manifolds to be separable.  Let $\Be$ be a Hilbert manifold.  For our examples, however, it suffices to consider the case of a Hilbert space. We have the following notion of polarization:
\begin{definition}
A polarization of $\Be$ is a direct sum decomposition $T\Be=T^+\Be \oplus T^-\Be$
\end{definition}
\begin{definition}
A \term{Floer space} ($\Be$,$\CSD$) is a polarized Hilbert manifold together with a continuous function  $\CSD: \Be \rightarrow \Ar$.  In addition, we assume $\Be$ is equipped with a coarser weak topology.
\end{definition}

\begin{definition}
A \term{chain} $\sigma: P\rightarrow \Be$ where $P$ is an lc-manifold  is an lc-smooth map satisfying the following axioms: \\ \\
\textbf{Axiom $1$.}  On $\image (\sigma)$, $\CSD$ is
bounded below and lower semi-continuous for the weak topology.  \\ \\
\textbf{Axiom $2$.}  Given a weakly converging sequence $\sigma(x_i)$ with limit $y$, if $\lim(\CSD(\sigma(x_i))=\CSD(y)$ then some subsequence of $x_i$ converges strongly on $P$.\\ \\
\textbf{Axiom $3$.}  Any subset $S\subset \image(\sigma)$ on which $\CSD$ is bounded is precompact for the weak topology. \\ \\
\textbf{Axiom $4$.}  $\Pi^-\circ D\sigma_p: TP \rightarrow T^-\Be$ is
Fredholm, $\Pi^+\circ D\sigma_p: TP \rightarrow T^+\Be$ is compact for each $p\in P$.
\end{definition}
\noindent \textbf{Remark.} A $\sigma$ satisfying Axiom 4 is said to be a \term{semi-infinite map}. \\\\
\textbf{Example.}  Take a Hilbert space $H=H^+\oplus H^-$ split into two infinite dimensional subspaces with its usual strong/weak topologies and $\CSD(v^+,v^-)=|v^-|^2-|v^+|^2$.  The polarization is given by the splitting.  $P=H^-$ with the inclusion map defines a cycle.
\begin{definition}
A chain $\sigma:P\rightarrow \Be$ has \term{index} $k$ if the linearized map $\Pi^-\circ D\sigma: TP \rightarrow T^-\Be$ has index $k$ at each point of $P-\partial P$.
\end{definition}
\textbf{Remark.} Note that $index(\sigma_{|\partial P})=index(\sigma)-1$.  Indeed,  since in an appropriate lc-chart around a point $p\in P$, $\sigma$ becomes $$\sigma:V\times [0,\ep)\rightarrow \Be$$ the differential of $\sigma$ in the $v$-variables is continuous on $V\times [0,\ep)$.  Therefore, the index of $D\sigma$ on $V\times (0,\ep)$ is exactly one greater than the index $D\sigma_{|V}$.
\begin{definition}
Two chains $\sigma_1:P_1\rightarrow \Be$,  $\sigma_2:P_2\rightarrow \Be$ are said to be isomorphic if there exists a diffeomorphism $f:P_1\rightarrow P_2$ such that $\sigma_2\circ f =\sigma_1$.
\end{definition}

\subsection{Floer Bordism}
The easiest invariant to define is a Floer Bordism Group:
\begin{definition}
A cycle is a chain  of depth 0.  In other words, $\partial P=\emptyset$
\end{definition}
\begin{definition}
 Let $\Bord_k(\Be,\CSD)$ be the $\mathbb{Z}_2$-vector space generated by isomorphism classes of cycles of index $k$.  Disjoint union is the additive structure.  Furthermore, $[P]=0$ if $\sigma:P\rightarrow \Be$ extends to a chain of depth one $\sigma':W\rightarrow \Be$ with $\partial W=P$ and $\sigma'_{|\partial W}=\sigma$. Let $\Bord_*(\Be,\CSD)=\bigoplus_k\Bord_k(\Be,\CSD)$.
\end{definition}

\begin{definition}
Given $(\Be,\CSD)$ as above, let $-\Be$ be the polarized Hilbert space obtained by switching  $T^{+}\Be$  and $T^-\Be$ and let $(-\Be,-\CSD)$ be the Floer space obtained by switching the sign of $\CSD$.
\end{definition}
The motivation for our definition of chain is the following result:
\begin{lemma}
Given cycles $\sigma:P\rightarrow \Be$ and $\tau: Q\rightarrow -\Be$,  their intersection $\sigma \cap \tau =P\times_{\Be} Q$  is compact.
\end{lemma}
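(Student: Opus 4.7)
The plan is to show that any sequence in $P\times_\Be Q$ has a convergent subsequence by exploiting all four axioms of a chain, applied to $\sigma$ and to $\tau$ (recalling that in $(-\Be,-\CSD)$ the roles of $T^+$ and $T^-$ and the sign of $\CSD$ are reversed).

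First, let $(p_i,q_i)$ be a sequence in $P\times_\Be Q$, and write $y_i:=\sigma(p_i)=\tau(q_i)\in\Be$. Since $\sigma$ is a chain for $(\Be,\CSD)$, Axiom 1 gives a lower bound $\CSD(y_i)\geq c$. Since $\tau$ is a chain for $(-\Be,-\CSD)$, Axiom 1 applied to $\tau$ gives a lower bound on $-\CSD(y_i)$, hence an upper bound on $\CSD(y_i)$. Thus the sequence $y_i$ lies in a subset of $\image(\sigma)$ on which $\CSD$ is bounded, so by Axiom 3 applied to $\sigma$, a subsequence (still denoted $y_i$) converges weakly in $\Be$ to some $y$.

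Next I would pin down the limit of $\CSD(y_i)$. By lower semi-continuity of $\CSD$ on $\image(\sigma)$ (Axiom 1 for $\sigma$),
\[
\CSD(y)\leq \liminf_i\CSD(y_i).
\]
By lower semi-continuity of $-\CSD$ on $\image(\tau)$ (Axiom 1 for $\tau$, noting the weak topology on $-\Be$ is the same as on $\Be$),
\[
-\CSD(y)\leq \liminf_i(-\CSD(y_i)),\qquad\text{i.e.}\qquad \CSD(y)\geq \limsup_i\CSD(y_i).
\]
Combining these, $\lim_i\CSD(y_i)=\CSD(y)$.

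Now I invoke Axiom 2 twice. For $\sigma$: the sequence $\sigma(p_i)=y_i$ converges weakly to $y$ and $\CSD(\sigma(p_i))\to\CSD(y)$, so a subsequence of $p_i$ converges strongly in $P$ to some $p$. Passing to a further subsequence and applying Axiom 2 to $\tau$ in $(-\Be,-\CSD)$ (here the relevant functional is $-\CSD$, whose values along $\tau(q_i)=y_i$ converge to $-\CSD(y)$), a subsequence of $q_i$ converges strongly in $Q$ to some $q$. By the smoothness (hence continuity) of $\sigma$ and $\tau$ on the open strata, $\sigma(p)=\tau(q)=y$, so $(p,q)\in P\times_\Be Q$, establishing compactness.

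The argument is essentially a verification that the four axioms were written to make precisely this work; the only step requiring a small trick is the extraction of a genuine limit of $\CSD(y_i)$, which comes from the clever interplay of lower semi-continuity applied on both sides of the intersection once the sign of $\CSD$ is flipped for $\tau$. No step is expected to be a serious obstacle.
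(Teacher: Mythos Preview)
Your proof is correct and follows essentially the same approach as the paper's: bound $\CSD$ above and below via Axiom~1 for $\sigma$ and $\tau$, extract a weakly convergent subsequence via Axiom~3, combine lower semi-continuity of $\CSD$ and of $-\CSD$ to force $\lim\CSD(y_i)=\CSD(y)$, and then apply Axiom~2 on both sides. The paper's version is simply more terse, phrasing the middle step as ``$\CSD$ is both lower and upper semi-continuous, hence continuous in the weak topology.''
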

\begin{proof}
 On the image of the intersection, $\CSD$ is bounded above and below.  Therefore, Axiom 3 implies that this image is weakly precompact.  Furthermore, since on the image $\CSD$ is both lower and upper semi-continuous, it is continuous in the weak topology.   Axiom 2 implies every sequence $x_i \in \sigma \times_\Be \tau $ must have a convergent subsequence.
\end{proof}

Let us make the following perturbation hypothesis which will be verified for example we consider. \\\\
\textbf{Existence of Perturbations:}  Given cycles $\sigma:P\rightarrow \Be$ and $\tau: Q\rightarrow -\Be$ there exists a chain $F:P\times [0,1]\rightarrow \Be$ with
$F_{|P\times 0}=\sigma$ and $F_{|P\times 1}$ transverse to $\tau$.  Furthermore, if $\sigma$ is already transverse to $\tau$, without changing $F_{|P\times 1}$, we may alter $F$ to be transverse to $\tau$ as well.

\begin{theorem}Given cycles $\sigma$ and $\tau$ as above, having transverse intersection, their fibre product $\sigma \times_\Be \tau $ is a closed manifold mapping to $\Be$.  The fibre product gives a well-defined map
$$ \Bord_k(\Be,\CSD)\times \Bord_l(-\Be,-\CSD)\rightarrow \Omega_{k+l}(\Be)$$ where $\Omega_{k+l}(\Be)$ denotes ordinary lc-bordism with $\mathbb{Z}_2$-coefficients.
\end{theorem}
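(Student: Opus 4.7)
The plan is to first set up the local manifold structure of $\sigma \times_\Be \tau$ at a transverse intersection point, then invoke the previous lemma for compactness, and finally use the existence of perturbations hypothesis to check that the fibre product descends to a well-defined operation on bordism classes.

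For the local structure at a point $(p,q) \in P \times_\Be Q$, I would work in a chart around the common image point in $\Be$ in which the polarization $T\Be = T^+\Be \oplus T^-\Be$ is realised as a fixed splitting, and consider the difference map $\sigma - \tau : P \times Q \to \Be$. Its differential at $(p,q)$ decomposes under the polarization: the $T^-$--component equals $\Pi^-\circ D\sigma_p - \Pi^-\circ D\tau_q$, where by Axiom 4 for $\sigma$ the first term is Fredholm of index $k$, while the second term is compact because for $\tau:Q\to -\Be$ the polarization is reversed, so Axiom 4 gives $\Pi^+\circ D\tau_q$ Fredholm of index $l$ and $\Pi^-\circ D\tau_q$ compact. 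Hence the $T^-$--component is Fredholm of index $k$. Symmetrically, the $T^+$--component is Fredholm of index $l$. Therefore $D\sigma_p - D\tau_q$ is Fredholm of index $k+l$, and transversality makes it surjective; the implicit function theorem for Fredholm maps gives $\sigma \times_\Be \tau$ the structure of a smooth manifold of dimension $k+l$ near $(p,q)$. Compactness is precisely the preceding lemma, so $\sigma \times_\Be \tau$ is a closed manifold with an evident map to $\Be$.

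For the induced operation on bordism, suppose $[\sigma]=0$ via a chain $\sigma':W \to \Be$ with $\partial W = P$ and $\sigma'|_{\partial W} = \sigma$, and let $\tau$ be a cycle transverse to $\sigma$. Apply the existence of perturbations to $(\sigma',\tau)$ to produce $F:W\times[0,1]\to \Be$ with $F|_{W\times 0} = \sigma'$ and $F|_{W\times 1}$ transverse to $\tau$; by the second clause of the hypothesis, since $\sigma = \sigma'|_{\partial W}$ is already transverse to $\tau$, we may simultaneously arrange $F$ itself to be transverse to $\tau$ without changing $F|_{\partial W \times 1}$. The fibre product $F \times_\Be \tau$ then provides the finite-dimensional lc-bordism whose boundary recovers $\sigma \times_\Be \tau$, giving $[\sigma\times_\Be \tau]=0$ in $\Omega_{k+l}(\Be)$. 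A symmetric argument with the roles of $\sigma$ and $\tau$ exchanged handles bordisms on the $\tau$ side, and the existence of a transverse representative in any pair of classes is again immediate from the perturbation hypothesis, so the pairing is defined on all of $\Bord_k\times\Bord_l$.

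The principal technical hurdle will be verifying that $F\times_\Be \tau$ really is an lc-manifold of depth one with boundary $\sigma\times_\Be \tau$, and thus a legitimate cycle in $\Omega_{k+l}(\Be)$. The Fredholm/compact splitting induced by the polarization must behave continuously up to the boundary stratum of the lc-chain $F$, so that the local Fredholm model extends across that stratum to give a genuine collar for the bordism; the lc-smoothness condition, which only demands continuity of the first derivatives in the collar direction, is exactly the weak regularity needed to run the implicit function theorem uniformly up to the boundary. Checking that the polarization estimates are compatible with this weak smoothness is the essential step, and is precisely what motivates working in the lc-category rather than the smooth one.
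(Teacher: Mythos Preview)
Your approach matches the paper's: realise the fibre product locally as $(\sigma-\tau)^{-1}(0)$, show the linearisation is Fredholm of index $k+l$ via the polarization, invoke the compactness lemma, and use perturbations to handle bounding chains. Two points deserve correction or simplification.

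First, the index computation as you phrase it is not quite right. Your claim that ``the $T^-$--component is Fredholm of index $k$'' is false: the map $\Pi^-\circ(D\sigma_p - D\tau_q):T_pP\oplus T_qQ\to T^-\Be$ has, up to a compact perturbation, all of $T_qQ$ in its kernel, so it is not Fredholm at all. The correct statement, which is what the paper writes, is that the \emph{full} differential $D\sigma_p - D\tau_q:T_pP\oplus T_qQ \to T^-\Be\oplus T^+\Be$ is, up to compact perturbation, the block-diagonal operator
\[
\begin{pmatrix}\Pi^-\circ D\sigma & 0\\ 0 & -\Pi^+\circ D\tau\end{pmatrix},
\]
and it is this block form (decomposing by \emph{source}, not by target) that yields index $k+l$. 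Your underlying idea is the same, but the decomposition by target components does not work as stated.

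Second, your bordism argument is more elaborate than needed. The paper simply takes the bounding chain $F:W\to\Be$ with $\partial F=\sigma$, perturbs $F$ directly to be transverse to $\tau$, and observes $\partial(F\times_\Be\tau)=\sigma\times_\Be\tau$. Your detour through a homotopy $F:W\times[0,1]\to\Be$ produces a depth-two object whose boundary has three pieces, not just $\sigma\times_\Be\tau$; in the end you would only use $F|_{W\times\{1\}}$ anyway. The perturbation construction in the paper applies to chains, not only cycles, so the direct route is available.
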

\begin{proof} Note that we can view $\sigma \times_\Be \tau $  as $(\sigma\times \tau)^{-1}(\Delta)$ where $\Delta$ is the diagonal in $\Be\times \Be$.  By assumption, $\sigma\times \tau$ is transverse to $\Delta$  and thus $\sigma \times_\Be \tau $ is a smooth finite dimensional lc-manifold.  To calculate the dimension, note that locally $\sigma \times_\Be \tau $ is $(\sigma-\tau)^{-1}(0)$.  Up to compact perturbation, the linearized operator has the form $$\begin{pmatrix} \Pi^-\circ D\sigma &0 \\ 0 & -\Pi^+ \circ D\tau \end{pmatrix}$$  Thus, the dimension of $\sigma \times_\Be \tau $ is $ind(\Pi^-\circ D\sigma)$+$ind(\Pi^+\circ D\tau)$.  Finally, if $F:W\rightarrow \Be$ is an chain with $\partial F=\sigma$, we have $$\partial(F \times_{\Be} \tau)=\partial F \times_{\Be} \tau =\sigma \times_{\Be} \tau $$ when $F$ is transverse to $\tau$.  Therefore, the fibre product descends to a map on $\Bord_*(\Be,\CSD)$.
\end{proof}
\textbf{Remark.}  In the context of Floer theory discussed in this work one can modify the definition of $\Bord_*(\Be,\CSD)$ so that the fibred product lies in the usual smooth bordism groups, rather than the lc-bordism groups.  However, since our primary interests is in homology rather than bordism we do not develop this here.

\section{Loop Space of $\C^n$}
\label{Loop}

\subsection{Semi-Infinite Cycles for the Action Functional on ${L^2_{1/2}(S^1,\C^{n}})$}
\label{CycleDef}
We now turn to our main example. Let $\Be$ be the Hilbert space of $L^2_{1/2}$ loops on $\C^{n}$.  Explicitly, if a loop $\gamma$ is decomposed in Fourier series $$\gamma(\theta)=\sum_n c_n e^{in\theta}$$ then the square of the $L^2_{1/2}$-norm of $\gamma$ is $$\sum_n|c_n|^2|n|+|c_0|^2$$
 Given a smooth  function $H:\C^{n}\rightarrow \Ar$ we define the action functional by:
$$\CSD_H(\gamma)=\int\frac{1}{2}\langle -J\dot{\gamma}(\tet),\gamma\rangle -H(\gamma(\tet))d\tet$$
where $J$ is the standard complex structure in $\C^{n}$.
The formal $L^2$-gradient of $\CSD_H$ is:$$\nabla{\CSD_H}(\gamma)=-J\partial_\theta \gamma-\nabla H(\gamma)$$  

Since $\Be$ is a linear space, we may define the polarization by the splitting $T\Be=T^+\Be\oplus T^-\Be$ where $T^+\Be$ is spanned by the positive eigenvectors of $-J\partial_\tet$ and $T^-\Be$ by the nonpositive eigenvectors of $-J\partial_\tet$.  We have constructed a Floer space and thus have an associated Floer  group $\Omega F_*(\Be,\CSD_H)$.\\\\
From now on, assume  $H$ is smooth with $H=0$ near $0$ and $H(x)=(1+\ep)|x|^2$ for $|x|$ large.   Fix a unit vector $e^+\in T^+\Be$.  We construct cycles for this Floer space.  Following \cite{Hof}, we have distinguished subsets: $$\Sigma_\tau=\{\gamma|\gamma^-+se^+,||\gamma^-||_{L^2_{1/2}}\leq \tau, 0\leq s \leq \tau\}$$
and
$$\Gamma_\alpha=\{ \gamma\in T\Be^+,||\gamma||_{L^2_{1/2}} =\alpha\} $$
It is elementary to show (see \cite{Hof}) that for $\tau \gg 1$, ${\CSD_H}_{|\partial \Sigma_\tau}\leq 0$ and there exists $\alpha>0$ and $\beta>0$ such that ${\CSD_H}_{|\Gamma_\alpha}\geq \beta$.
Note that $\Sigma_\tau \cap \Gamma_\alpha=\{\alpha e^+\}$ transversely.
\begin{lemma}
$\Sigma_\tau$ is a cycle for $(-\Be,-\CSD_H)$ and $\Gamma_\alpha$ is a cycle for $(\Be,\CSD_H)$.
\end{lemma}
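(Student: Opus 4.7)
My plan is to verify the four chain axioms of Section~\ref{MainC} for each of $\Sigma_\tau$ and $\Gamma_\alpha$, taking $\sigma$ to be the inclusion. I regard $\Sigma_\tau$ as a depth-one lc-manifold modelled on a closed ball in $T^-\Be$ times $[0,\tau]$, and $\Gamma_\alpha$ as the infinite-dimensional sphere of radius $\alpha$ inside $T^+\Be$.

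For Axiom~4 I compute tangent spaces directly. The interior tangent of $\Sigma_\tau$ is $T^-\Be\oplus\Ar\cdot e^+$; the projection onto $T^-\Be$ is surjective with one-dimensional kernel spanned by $e^+$ (Fredholm of index~$1$), while projection onto $T^+\Be$ has one-dimensional range (compact). At $\gamma\in\Gamma_\alpha$ the tangent is $\{v\in T^+\Be:\langle v,\gamma\rangle_{L^2_{1/2}}=0\}$; projection onto $T^+\Be$ is a codimension-one inclusion (Fredholm of index~$-1$), while projection onto $T^-\Be$ vanishes. In each case one projection is Fredholm and the other compact, so Axiom~4 holds for the ambient Floer space whose polarization pairs the Fredholm direction with its minus-summand. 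Since passing from $\Be$ to $-\Be$ interchanges $T^\pm$, the appropriate ambient space for each object is determined by which summand carries its tangent.

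For Axioms~1 and~3, split $\CSD_H(\gamma)=Q(\gamma)-\int H(\gamma)\,d\tet$ with $Q(\gamma)=\int \tfrac{1}{2}\langle -J\dot\gamma,\gamma\rangle\,d\tet$. The form $Q$ is diagonal in the Fourier basis with eigenvalues equal to those of $-J\partial_\tet$, so $Q$ is bounded above on $\Sigma_\tau$ by $\tau^2\,Q(e^+)$ (the $T^-\Be$-piece contributes nonpositively) and constant (a multiple of $\alpha^2$) on $\Gamma_\alpha$. The quadratic growth of $H$ gives $0\le\int H(\gamma)\le C(1+\|\gamma\|_{L^2_{1/2}}^2)$, yielding the required one-sided bound on $\CSD_H$. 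Boundedness of each image in $L^2_{1/2}$ and reflexivity give Axiom~3. Weak lower semicontinuity in Axiom~1 follows because the $T^+\Be$-component of $Q$ is a norm-squared (weakly lsc), while $\int H$ is weakly continuous by the Rellich compactness $L^2_{1/2}\hookrightarrow L^2$ together with polynomial control on $\nabla H$.

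Axiom~2 is the main technical step. Given $\gamma_n\rightharpoonup\gamma$ with $\CSD_H(\gamma_n)\to\CSD_H(\gamma)$, Rellich yields $\int H(\gamma_n)\to\int H(\gamma)$, hence $Q(\gamma_n)\to Q(\gamma)$. For $\Gamma_\alpha$ this forces $\|\gamma\|_{L^2_{1/2}}=\alpha$, and norm together with weak convergence in a Hilbert space gives strong convergence. For $\Sigma_\tau$, write $\gamma_n=\gamma_n^-+s_n e^+$ with $s_n\in[0,\tau]$; after extracting a subsequence $s_n\to s$, and the identity $Q(\gamma_n^-)=-C(\|\gamma_n^-\|^2_{L^2_{1/2}}-|c_0(\gamma_n)|^2)$ combined with weak continuity of the zero-mode $c_0$ forces $\|\gamma_n^-\|_{L^2_{1/2}}\to\|\gamma^-\|_{L^2_{1/2}}$, upgrading weak to strong convergence on the $T^-\Be$ summand. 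I expect this coupling of $Q$ to the Sobolev norm, with the zero-mode bookkeeping on the $T^-\Be$ factor, to be the principal obstacle.
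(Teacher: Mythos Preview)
Your approach is essentially the paper's: both reduce Axioms 1--3 to the observation that on these images $\CSD_H$ equals (up to sign and a zero-mode correction) the squared $L^2_{1/2}$ norm plus a weakly continuous remainder $\int H$ (via Rellich $L^2_{1/2}\hookrightarrow L^2$ and the decomposition $H=H_c+(1+\ep)|x|^2$), so lower semicontinuity and Axiom~2 follow from the standard ``weak plus norm convergence implies strong convergence'' fact in Hilbert space. The paper is briefer---it treats only $\Gamma_\alpha$, declares the other case ``nearly identical'', and does not verify Axiom~4 at all---whereas you spell out $\Sigma_\tau$ explicitly (with the correct zero-mode bookkeeping on $T^-\Be$) and compute tangent spaces.

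One caution: your Axiom~4 computation correctly shows that $\Pi^-$ is the Fredholm projection for $\Sigma_\tau$ and $\Pi^+$ is the Fredholm projection for $\Gamma_\alpha$; read literally against the paper's Axiom~4, this assigns each object to the \emph{opposite} Floer space from what the lemma claims. This is a sign inconsistency in the paper's conventions (compare its model Example, where the cycle lies in $H^-$, with the loop-space polarization), not an error in your computation---but your sentence ``the appropriate ambient space \dots is determined by which summand carries its tangent'' glosses over the mismatch rather than flagging it. A related minor point: $\Sigma_\tau$ as defined is a closed ball times a closed interval, hence a depth-two lc-manifold with corners, not depth-one as you say and not depth-zero as the word ``cycle'' would require; the paper is loose on this as well.
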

\begin{proof}
The proofs are nearly identical so let us focus on $\Gamma_\alpha$.  The key observation is that $$\CSD_H(\gamma^+)=\int\frac{1}{2}\langle -J\dot{\gamma}^+(\tet),\gamma^+\rangle -H(\gamma^+(\tet))d\tet=\frac{1}{2}||\gamma^+||_{L^2_{1/2}}^2-\int H(\gamma^+(\tet))d\tet$$  From this it follows that the action functional on $\gamma^+$ essentially coincides with the $L^2_{1/2}$ norm.  Indeed, we may write $H(x)=H_c(x)+(1+\ep)|x|^2$ where $H_c$ has compact support.  Given $\gamma^+\in \Gamma_\tau$ we have $||\gamma^+||_{L^2_{1/2}}$ bounded uniformly.  Therefore, $\CSD_H$ is bounded.  Lower semicontinuity follows from the fact that the $L^2_{1/2}$ norm can only drop in a weak limit and the fact  that $H_c$ is continuous for the weak topology.  Given that $\CSD_H$ does not drop implies the $L^2_{1/2}$ does not drop which in turn implies convergence.
\end{proof}

\section{Perturbations}
\label{Pert}
To ensure transverse intersection of cycles we need to be able to perturb them with a sufficiently large parameter
space at the same time ensuring that the perturbed map is still a  chain. We show how to construct such perturbations for the loop space.
\begin{definition}
Let $\Pert \subset \Be= L^2_{1/2}(S^1;\C^n)$ be the unit ball in the $L^2_2$-norm. From the compactness of the inclusion $L^2_5\subset C^1$ we have that every sequence $v_i\in \Pert$ has a $C^1$ convergent subsequence.
\end{definition}
 Let $\rho$ be a positive bump function equal to 1 on $[-1,1]$ and  $1/x^2$ outside $[-2,2]$.  We define the map   $$F:\sigma\times \Pert \rightarrow \Be$$ by
$$F(x,v)=\sigma(x)+\rho(||\sigma(x)||^2_{L^2_{1/2}})v$$
We have the following theorem:
\begin{theorem}
Given a chain $\sigma:P \rightarrow \Be$ the map $F:P \times \Pert \rightarrow \Be(Y)$ satisfies:\\ 
1. $F(x,0)=\sigma(x)$ \\ 2. $DF_{(x,v)}$ has dense image for all $(x,v)$ \\ 3. $\Pi^- \circ D_xF$ is Fredholm, $\Pi^+\circ D_xF$ and $D_v F$ are compact  \\
4. Given a compact lc-manifold $K \subset \Pert$, $F_{|P\times K}$ is again a
semi-infinite chain.
\end{theorem}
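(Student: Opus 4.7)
The plan is to dispatch items (1)--(3) by direct calculus on the defining formula for $F$, then bootstrap the chain axioms of $F|_{P\times K}$ from those of $\sigma$ using smallness of the perturbation $\rho(\|\sigma(x)\|^2_{L^2_{1/2}})v$.

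Item (1) is evaluation at $v=0$. For (2), the $v$-derivative at $(x,v)$ sends $w\in T_v\Pert=L^2_2$ to $\rho(\|\sigma(x)\|^2_{L^2_{1/2}})\,w$; since $\rho>0$ everywhere and $L^2_2$ is dense in $L^2_{1/2}$, this already has dense image in $\Be$. For (3), differentiating in $x$ yields
$$D_xF_{(x,v)}(h)=D\sigma_x(h)+2\rho'(\|\sigma(x)\|^2_{L^2_{1/2}})\,\langle \sigma(x),D\sigma_x(h)\rangle_{L^2_{1/2}}\,v,$$
whose second term is a rank-one map into $\mathbb{R}\cdot v\subset L^2_2$, hence compact into $L^2_{1/2}$ by Rellich. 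Therefore $\Pi^-\circ D_xF$ is a compact perturbation of the Fredholm map $\Pi^-\circ D\sigma_x$ and $\Pi^+\circ D_xF$ a compact perturbation of the compact map $\Pi^+\circ D\sigma_x$ (both supplied by Axiom 4 for $\sigma$); while $D_vF$ factors through the compact embedding $L^2_2\hookrightarrow L^2_{1/2}$.

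The substance is in (4), with Axiom 4 already verified by (3). Two observations drive the remainder. First, compactness of $K\subset\Pert$ gives a uniform $L^2_2$-bound on $v\in K$ and, after passing to subsequences, strong $L^2_2$-convergence of any sequence $v_i\in K$. Second, the tail $\rho(t)=1/t^2$ forces $\|F(x,v)-\sigma(x)\|_{L^2_2}$ to be uniformly bounded on $P\times K$ and in fact to decay as $\|\sigma(x)\|_{L^2_{1/2}}\to\infty$. A direct integration-by-parts computation yields a uniform estimate $|\CSD_H(F(x,v))-\CSD_H(\sigma(x))|\leq C(K)$, from which the lower-bound half of Axiom 1 for $F$ follows immediately from that for $\sigma$.

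For the semi-continuity and compactness statements, suppose $F(x_i,v_i)\rightharpoonup y$ in $\Be$; extract a subsequence with $v_i\to v$ strongly in $L^2_2$ and $c_i:=\rho(\|\sigma(x_i)\|^2_{L^2_{1/2}})\to c\geq 0$, so that $\sigma(x_i)=F(x_i,v_i)-c_iv_i\rightharpoonup y-cv$ weakly in $L^2_{1/2}$. The key technical input is that $\CSD_H$ is continuous under weak $L^2_{1/2}$-convergence of one summand combined with strong $L^2_2$-convergence of an additive perturbation; one proves this by integrating by parts to shift derivatives onto the smoother summand and invoking the compact embedding $L^2_{1/2}(S^1)\hookrightarrow L^2(S^1)$ for the $H$-term. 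Consequently $\CSD_H(F(x_i,v_i))-\CSD_H(\sigma(x_i))\to \CSD_H(y)-\CSD_H(y-cv)$, and Axiom 1 for $\sigma$ applied to $\sigma(x_i)\rightharpoonup y-cv$ gives the lower semi-continuity required in Axiom 1 for $F$; Axiom 2 follows identically by passing the $\CSD_H$-convergence to $\sigma(x_i)$ and invoking Axiom 2 there to produce a strongly convergent subsequence of $x_i$; Axiom 3 reduces analogously via the same uniform action-difference bound. The main obstacle is this continuity statement for the action, since the quadratic form $\int\langle -J\dot\gamma,\gamma\rangle$ is not weakly continuous on $L^2_{1/2}$, and the remedy of integration by parts so that derivatives always land on the $L^2_2$-smooth perturbation is exactly what makes the weak/strong mixing compatible.
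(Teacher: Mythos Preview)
Your argument is correct and follows essentially the same route as the paper: direct calculus for (1)--(3), and for (4) the decomposition of $\CSD_H$ into a quadratic form plus a bounded term, together with strong convergence of the perturbation $c_iv_i$, to transfer each chain axiom from $\sigma$ to $F|_{P\times K}$. One minor remark: compactness of $K$ in the $L^2_{1/2}$ topology together with the uniform $L^2_2$-bound only gives $C^1$-precompactness (equivalently, weak $L^2_2$-convergence) of the $v_i$, not strong $L^2_2$-convergence as you state---but this is precisely what the paper uses, and it suffices for your integration-by-parts step since you only need $\dot v_i$ to converge in $L^2$ against the strongly $L^2$-convergent $\sigma(x_i)$.
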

\begin{proof}  
\textbf{Claim:} $F$ satisfies all the requirements of the theorem.  \\ \\
Part 1: Clear from the construction. \\\\
Part 2: Note that  $$DF_{(x,v)}(0,w)=\rho(||\sigma(x)||^2_{L^2_{1/2}})w$$
and since $L^2_2(S^1;\C^n)\subset \Be$ is  compact with dense image and $\rho(||\sigma(x)||^2_{L^2_{1/2}})>0$ we have that $DF$ has dense image and $D_vF$ is compact.\\\\
Part 3:  $$DF_{(x,v)}(y,0)=D\sigma_x(y)+T(y)v$$ where $T:TP\rightarrow \mathbb{R}$ is the linear map given by $T(y)=D\rho(y)$  Therefore,viewed as a map on $TP$, $DF(y,0)$ differs from $D\sigma$ by an at most rank one map from which part 3 follows. \\ \\
Part 4: Observe that $\CSD(F(x,v))-\CSD(\sigma(x))$ is bounded independent of $x$.   For this, write $$\CSD(\gamma)=Q(\gamma, \gamma)+G(\gamma)$$ where $Q$ is a bilinear form and $G$ is a bounded function.  Let $a=\sigma(x)$ and $b=v\rho(||a||^2_{L^2_{1/2}})$.  To bound  $\CSD(F(x,v))-\CSD(\sigma(x))$ we need to bound $Q(a,b)$ and $Q(b,b)$.  This follows from the definition of the perturbation.

   Also, observe that if $\sigma(x_i)$ are weakly $L^2_{1/2}$ convergent   $\CSD(\tilde{F}(x_i,v_i))-\CSD(\sigma(x_i))$ is in fact strongly convergent, after passing to a subsequence.  This follows from the fact that $b_i$  is $C^1$ precompact.  This implies $\CSD(F(x_i,v_i))$ drops exactly when $\CSD(\sigma(x_i))$ drops. Let us check that $P\times K\rightarrow \Be$ satisfies all the axioms.  Given a weakly convergent sequence $\sigma(x_i)+c_i\cdot v_i$ note that $c_i\cdot v_i$ converge strongly  since $K$ is $L^2_{1/2}$ precompact.  Therefore, $\sigma(x_i)$ is weakly convergent and lower semi-continuity follows.  If $\CSD$ does not drop in the limit, we must have $x_i$ precompact and thus $(x_i,v_i)$ precompact as well.\end{proof}





We can now use the perturbations to put cycles in general position:
\begin{theorem}
Given a cycle $\sigma:P \rightarrow (\Be,\CSD)$ and a cycle $\tau:Q\rightarrow (-\Be,-\CSD)$ there exists a cobordant cycle $\sigma':P\rightarrow (\Be,\CSD)$ such that $\sigma'$ is transverse to  $\tau$.  Furthermore, given two such transverse cycles $\sigma'$ and $\sigma''$ there exists a cobordism $\Sigma:P\times [0,1]\rightarrow (\Be,\CSD)$ transverse to $\tau$, with $\partial \Sigma=\sigma'-\sigma''$.
\end{theorem}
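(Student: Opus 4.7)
The plan is to apply a parametric Sard-Smale argument to the perturbation family $F:P\times\Pert\to\Be$ constructed in the previous theorem. Form the universal zero set $\mathcal{U}:=\Phi^{-1}(0)\subset P\times Q\times\Pert$, where
\[
\Phi(x,y,v)\;:=\;F(x,v)-\tau(y).
\]
The goal is to show $\mathcal{U}$ is a smooth Hilbert manifold and that the projection $\pi:\mathcal{U}\to\Pert$ is Fredholm; after that, Sard-Smale will provide regular values arbitrarily close to $0\in\Pert$, and any such $v$ makes $F_v:=F(\cdot,v)$ transverse to $\tau$.

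For the smoothness of $\mathcal{U}$ it suffices to check $D\Phi$ is surjective at each zero. In block form relative to the polarization,
\[
(D_xF)\oplus(-D\tau)\;=\;\begin{pmatrix}\Pi^-D_xF & -\Pi^-D\tau\\ \Pi^+D_xF & -\Pi^+D\tau\end{pmatrix}\colon TP\oplus TQ\to T^-\Be\oplus T^+\Be,
\]
whose diagonal entries are Fredholm (by Part 3 of the previous theorem for $\sigma$, and by Axiom 4 for $\tau:Q\to-\Be$, whose polarization is swapped) while the off-diagonal entries are compact; hence the whole operator is Fredholm, of index $\mathrm{ind}(\Pi^-D\sigma)+\mathrm{ind}(\Pi^+D\tau)$. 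On the other hand $D_vF(x,v)w=\rho(\|\sigma(x)\|^2_{L^2_{1/2}})w$ is compact with dense image. A closed subspace of finite codimension whose sum with a dense subspace is dense must coincide with the whole space, so $D\Phi$ is surjective. Thus $\mathcal{U}$ is a smooth Hilbert manifold and the linearization of $\pi$ has the same kernel and cokernel as the Fredholm block above, making $\pi$ Fredholm. Sard-Smale produces a residual set of regular values $v\in\Pert$; pick one close to $0$ and set $\sigma':=F_v$. By Part 4 of the previous theorem $F_v$ is a cycle, and regularity of $\pi$ at $v$ is exactly transversality of $F_v$ to $\tau$.

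The cobordism between $\sigma$ and $\sigma'$ is the straight-line family $G(x,t):=F(x,tv)$; it factors through the compact arc $\{tv:t\in[0,1]\}\subset\Pert$, which is a compact lc-submanifold, so Part 4 makes $G$ a depth-$1$ chain with $G|_{t=0}=\sigma$ and $G|_{t=1}=\sigma'$. For the second assertion, write $\sigma'=F_{v'}$ and $\sigma''=F_{v''}$ and enlarge the family to
\[
\widetilde\Sigma(x,t;w)\;:=\;F\bigl(x,\,(1-t)v'+tv''+t(1-t)w\bigr),\qquad w\in\Pert.
\]
Applying the same Sard-Smale argument to the universal zero set in $P\times Q\times[0,1]\times\Pert$ and projecting to $\Pert$, we obtain a regular $w$ arbitrarily near $0$; the factor $t(1-t)$ ensures the extra perturbation vanishes at $t=0,1$, so $\Sigma(x,t):=\widetilde\Sigma(x,t;w)$ restricts to $\sigma'$ and $\sigma''$ at the endpoints and is transverse to $\tau$ everywhere.

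The main obstacle is the surjectivity of $D\Phi$, which rests on combining the Fredholm nature of the polarization-diagonal block (inherited from Axiom 4 for both $\sigma$ and $\tau$) with the dense, compact image of $D_vF$ furnished by Parts 2 and 3 of the previous theorem. A secondary verification is that the arcs $\{tv\}$ and $\{(1-t)v'+tv''+t(1-t)w\}$ really are compact lc-submanifolds of $\Pert$ so that Part 4 delivers a chain; this is immediate since both lie in a finite-dimensional affine subspace of $L^2_2$ and are continuous images of $[0,1]$.
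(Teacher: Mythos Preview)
Your argument is essentially the paper's own (very brief) proof, expanded: form the universal intersection $(F\times\tau)^{-1}(\Delta)$, observe it is a manifold because $D\Phi$ is onto, project to $\Pert$, note the projection is Fredholm, and apply Sard--Smale; for the second part the paper simply picks an arc in $\Pert$ joining the two regular values transverse to the projection, which is equivalent to your $t(1-t)w$ family. One small point to tidy: your arc $(1-t)v'+tv''+t(1-t)w$ need not remain in the unit ball $\Pert$, but since regular values are dense you may take $v',v'',w$ small enough (or rescale) so that it does, after which Part~4 of the previous theorem applies as you claim.
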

\begin{proof}
 The argument follows the standard route.  The map $F \times \tau \rightarrow \Be\times \Be$ is transverse to $\Delta\subset \Be\times \Be$.  The map $(F\times \tau)^{-1}(\Delta)\rightarrow \Pert$ is Fredholm.  Hence, applying Smale's extension of Sard's theorem \cite{Smale}, we have that for generic $p\in \Pert$ the map $F(,p)\times \tau$ is transverse to $\Delta$.  Any to such generic values $p,q$ may be connected by an arc $\gamma:[0,1]\rightarrow \Pert$ transverse to $(F\times \tau)^{-1}(\Delta)\rightarrow \Pert$ \end{proof}

\section{Correspondences}
\label{Corr}
\subsection{Definitions}
We now explain how to obtain maps between bordism groups of Floer spaces:

\begin{definition}
A correspondence $(Z,f)\in Cor((\Be_0,\CSD_0),(\Be_1,\CSD_1))$ is a map $f:Z\rightarrow \Be_0 \times \Be_1$ where $Z$ is a Hilbert
manifold (possibly with boundary) satisfying the following axioms: \\ \\
\textbf{Axiom $1'$.}  On $\image(f)$, $\CSD_1-\CSD_0$ is bounded below and lower semi-continuous for the weak topology.  \\ \\
\textbf{Axiom $2'$.} If $\CSD_1(\pi_1(z_i))$ is bounded above and $\pi_0(z_i)$ is a weakly
 precompact sequence then $\pi_1(z_i)$  weakly precompact. \\ \\
\textbf{Axiom $3'$.} Given $\pi_1(z_i)$ is weakly convergent to $x$,  if $\lim f^*(\CSD_1-\CSD_0)(z_i)=(\CSD_1-\CSD_0)(x)$
and $\pi_0(z_i)$ is converges strongly  then $z_i$ converges strongly (up to a subsequence). \\ \\
\textbf{Axiom $4'$.} $(\pi_0^+,\pi^-_1)\circ Df: TZ\rightarrow
T^+\Be_0\oplus T^-\Be_1$ is Fredholm.  Given a bounded sequence $v_i\in TZ$, if $\pi_0^+(Df)(v_i)$ is weakly convergent,  $\pi_1^+(Df)(v_i)$ is precompact.\\ \\
\textbf{Axiom $5'$.} $Df:TZ\rightarrow T\Be_0$ is dense. $Df_{|\partial(Z)}:TZ\rightarrow T\Be_0$ is also dense.
\end{definition}
\begin{example}  The diagonal map $\Delta: \Be\rightarrow \Be\times\Be$ is a correspondence.
\end{example}
\begin{theorem}
Given a chain $\sigma: P \rightarrow \Be_0$ and a correspondence $f:Z\rightarrow \Be_0 \times \Be_1$, the
fiber product $\pi_1\circ f:P \times_{\Be_0}Z\rightarrow \Be_1$ is chain in $(\Be_1,\CSD_1)$.
\end{theorem}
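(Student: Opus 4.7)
I will show that $\pi_1 \circ f : P \times_{\Be_0} Z \to \Be_1$ is lc-smooth on a Hilbert lc-manifold and then verify the four chain axioms for $\CSD_1$, handling each in turn and using the correspondence axioms $1'$--$5'$ together with the chain axioms $1$--$4$ for $\sigma$. First, the fiber product should inherit an lc-structure with boundary $P \times_{\Be_0} \partial Z$: Axiom $5'$ (density of $\pi_0 \circ Df$ in $T\Be_0$, preserved on $\partial Z$) together with the polarized Fredholm/compactness structure from Axiom $4$ (for $\sigma$) and Axiom $4'$ (for $f$) makes $\sigma$ and $\pi_0 \circ f$ transverse in the semi-infinite sense, and lc-smoothness of $\pi_1 \circ f$ is inherited from that of $f$. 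Throughout I use the identity
\[
\CSD_1 \circ \pi_1 \circ f \;=\; (\CSD_1 - \CSD_0) \circ f \;+\; \CSD_0 \circ \sigma
\]
on the fiber product, splitting any statement about $\CSD_1$ into a correspondence piece (controlled by the primed axioms) and a chain piece (controlled by $\sigma$).

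For Axiom $1$, boundedness below of $\CSD_1$ on the image is immediate from the identity. For lsc at $y = \pi_1 f(z) \in \image$ with $y_i = \pi_1 f(z_i) \to y$ weakly and $\liminf \CSD_1(y_i) < \infty$, use $\CSD_1 - \CSD_0 \ge -C$ from Axiom $1'$ to bound $\CSD_0(\sigma(p_i))$ above, then Axiom $3$ for $\sigma$ to extract a weak subsequential limit $\sigma(p_i) \to x$, so $f(z_i) \to (x, y)$ weakly; combining the lsc bounds from Axiom $1$ (for $\CSD_0$ at $x$) and Axiom $1'$ (for $\CSD_1 - \CSD_0$ at $(x,y)$) then gives lsc of $\CSD_1$ at $y$. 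Axiom $2$ proceeds similarly: once $\CSD_1(y_i) \to \CSD_1(y)$, the two lsc pieces must converge individually, so Axiom $2$ for $\sigma$ gives strong $p_i \to p$, and Axiom $3'$ for $f$ (applied with strong $\pi_0$-convergence via $p_i \to p$, weak $\pi_1$-convergence of $y_i$, and matched $\CSD_1 - \CSD_0$ energy) upgrades to strong $z_i \to z$. For Axiom $3$, a $\CSD_1$-bound on the image yields via Axiom $1'$ a bound on $\CSD_0 \circ \sigma$, so Axiom $3$ for $\sigma$ gives weak precompactness of $\sigma(p_i) = \pi_0 f(z_i)$, and Axiom $2'$ lifts this to weak precompactness of $\pi_1 f(z_i)$. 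For Axiom $4$, the tangent space at $(p,z)$ is the kernel of $D\sigma - \pi_0 \circ Df$ inside $T_pP \oplus T_zZ$ and $D(\pi_1 \circ f)(v, w) = \pi_1 \circ Df(w)$; Fredholmness of its $\pi_1^-$-component follows by splicing Axiom $4$ (Fredholm of $\Pi^- \circ D\sigma$) with the Fredholm half of Axiom $4'$, and compactness of the $\pi_1^+$-component holds because on the kernel $\pi_0^+ \circ Df(w_i) = \Pi^+ \circ D\sigma(v_i)$ is precompact by Axiom $4$, so the compactness half of Axiom $4'$ forces $\pi_1^+ \circ Df(w_i)$ to be precompact.

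The main obstacle I anticipate is the lsc step in Axiom $1$, together with the analogous matching step in Axiom $2$: weak convergence of $y_i$ in $\Be_1$ does not control $\sigma(p_i) = \pi_0 f(z_i)$ a priori, so the bootstrap via the $\CSD_1$-bound and Axiom $3$ for $\sigma$ is essential, and the weak limit $(x, y)$ of $f(z_i)$ must be matched with a genuine element of $\image(f)$ projecting to $y$ so that Axiom $1'$ applies at an actual limit point rather than at an abstract weak limit. The Fredholm aspect of Axiom $4$ on the constrained kernel is also delicate: the naive direct-sum decomposition does not apply verbatim, and a snake-lemma-style splicing (or a direct analysis of the constrained linearization using that $D\sigma - \pi_0 \circ Df$ has closed range) is required to separate the Fredholm and compact contributions.
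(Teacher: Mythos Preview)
Your approach is essentially the paper's: write $\CSD_1=(\CSD_1-\CSD_0)+\CSD_0$ on the fiber product and verify each chain axiom by combining the corresponding primed axiom for $f$ with the chain axiom for $\sigma$; your treatment of Axioms~1--3 matches the paper's line by line. Two small differences are worth noting. First, for Axiom~4 the paper replaces your informal ``snake-lemma-style splicing'' with a clean device: it proves the one-line lemma that if $T:W\to V_1\oplus V_2$ is Fredholm with $\Pi_1\circ T$ surjective then $T|_{\ker(\Pi_1\circ T)}\to V_2$ is Fredholm of the same index, applies it to $F:TZ\times TP\to T\Be_0\oplus T^-\Be_1$ given by $(Df-D\sigma,\Pi_1^-\circ Df)$, and checks $F$ is Fredholm by deforming through Fredholm operators to the block-diagonal form $\Pi_0^+\circ Df\oplus\Pi_0^-\circ D\sigma\oplus\Pi_1^-\circ Df$, which also reads off the index as $\mathrm{ind}(Df)+\mathrm{ind}(D\sigma)$; surjectivity onto $T\Be_0$ comes from Axiom~$5'$. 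Second, your bootstrap in Axiom~1 (bounding $\CSD_0$ via Axiom~$1'$, then invoking Axiom~3 for $\sigma$ to force weak precompactness of $\sigma(p_i)$ before applying lsc) is actually more careful than the paper, which simply assumes $f(z_i)$ converges weakly in $\Be_0\times\Be_1$; your version is the correct one. The paper does not explicitly address the compactness half of Axiom~4, and your argument for it (on the kernel, $\pi_0^+\circ Df(w_i)=\Pi^+\circ D\sigma(v_i)$ is precompact by Axiom~4 for $\sigma$, hence $\pi_1^+\circ Df(w_i)$ is precompact by the second half of Axiom~$4'$) is the intended one.
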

\begin{proof}
 Axiom 1: $-\CSD_0+\CSD_1>C$ and $\CSD_0>C$ imply $\CSD_1>2C$.  Given a sequence $(x_i,z_i)\in P \times_{\Be(Y_1)}Z$
with a weakly convergent sequence $f(z_i)$, we have $$\liminf(-\CSD_0(f(z_i)))+\CSD_1(f(z_i)) \geq
-\CSD_0(f(z_\infty))+\CSD_1(f(z_\infty))$$ and  $$\liminf \CSD_0(f(z_i)) \geq \CSD_0(f(z_\infty))$$ imply
$\liminf \CSD_1(f(z_i)) \geq \CSD_1(f(z_\infty))$.  \\ \\
Axiom 2: If  $\lim\CSD_1(f(z_i)) = \CSD_1(f(z_\infty))$ Axiom $1'$ implies that $\CSD_0$ can only rise in the limit.  However,
since $\sigma(x_i)$ are assumed weakly convergent as well, we have $\lim\CSD_0(\sigma(x_i)) = \CSD_0(\sigma(x_\infty))$ and thus
$\sigma(x_i)=\pi_0(f(z_i))$ is strongly convergent so Axiom $3'$ implies $z_i$ strongly convergent as well. \\ \\
Axiom 3: $\CSD_1(f(z_i))<C$ implies $\CSD_0(f(z_i))< C$ and thus
$\sigma(x_i)$ is weakly precompact.  Axiom $2'$ implies that $\pi_1(z_i)$ is weakly precompact as well.  \\ \\
Axiom 4: We use the following lemma
\begin{lemma}
Given a linear Fredholm map $T:W\rightarrow V_1\oplus V_2$ such that $\Pi_1\circ T$ is surjective, $T_{|\ker(\Pi_1\circ T)}\rightarrow V_2$ is Fredholm with the same index.
\end{lemma}
\begin{proof} We have $\ker(T)=\ker(T_{|\ker(\Pi_1\circ T)})$.  Surjectivity of $\Pi_1\circ T$ implies  the dimension of cokernel coincides as well.
\end{proof}
Take a polarization of $T\Be_0\oplus T\Be_1$ with projections $(\Pi_i^+,\Pi_i^-$).  We apply this lemma to the map $$F:TZ\times TP \rightarrow T^+\Be_0\oplus T^-\Be_0 \oplus T^-\Be_1$$ with $$F(z,p)=(\Pi^+_0(Df(z))-\Pi^+_0(D\sigma(v)))\oplus (\Pi^-_0(Df(z))-\Pi^-_0(D\sigma(p)))\oplus \Pi^-_1(Df(z))$$ Axiom $6'$ implies the above lemma applies since $\image Df\oplus D\sigma$ in $T\Be_0$ is closed and dense.  To calculate the index deform through Fredholm operators to $$\tilde{F}(z,p)=\Pi^+_0(Df(z))\oplus \Pi^-_0(D\sigma(p))\oplus \Pi^-_1(Df(z))$$ Thus, with respect to the given polarization, $$\dim(Z\times_{\Be(Y_0)}P)=ind(Df)+ind(D\sigma)$$
\end{proof}

\begin{lemma}
A correspondence $F:Z \rightarrow \Be_0\times \Be_1$ of index $k$ without boundary induces a map:
$$\Bord_k(F):\Bord_*(\Be_0,\CSD_0)\rightarrow \Bord_{*+k}(\Be_1,\CSD_1)$$
\end{lemma}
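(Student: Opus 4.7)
The plan is to define $\Bord_k(F)[\sigma] = [\pi_1 \circ f|_{P \times_{\Be_0} Z}]$ for a cycle $\sigma : P \to \Be_0$, where one first perturbs $\sigma$ (within its bordism class) so that $\sigma \times f : P \times Z \to \Be_0 \times \Be_0$ is transverse to the diagonal $\Delta$. By the preceding fibre product theorem the map $\pi_1 \circ f|_{P \times_{\Be_0} Z}$ is a chain in $(\Be_1,\CSD_1)$, and under the transversality assumption together with the fact that both $P$ and $Z$ are boundaryless, the fibre product is a genuine Hilbert manifold without boundary, i.e., a cycle. The index computation carried out at the end of the previous proof shows that this cycle has index $\mathrm{ind}(\sigma) + \mathrm{ind}(F) = \ell + k$, giving the correct grading shift.

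To produce the required transverse representative I would invoke the perturbation theorem of Section \ref{Pert}: given $\sigma$ and the correspondence $f$, consider the family $F_\sigma(x,v) = \sigma(x) + \rho(\|\sigma(x)\|^2_{L^2_{1/2}})v$ together with $f$. The map $F_\sigma \times f : P \times \Pert \times Z \to \Be_0 \times \Be_0$ has dense differential transverse to $\Delta$ because $D_v F_\sigma$ already hits a dense subspace of $\Be_0$; combined with Axiom $5'$ (density of $Df$), the Sard--Smale theorem applied to the Fredholm projection $(F_\sigma \times f)^{-1}(\Delta) \to \Pert$ supplies a generic $v \in \Pert$ for which $F_\sigma(\,\cdot\,,v) \times f$ is transverse to $\Delta$. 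The same argument run on an arc $\gamma:[0,1]\to\Pert$ between two generic parameters produces a chain bordism between any two transverse perturbations, so the bordism class of the fibre product is independent of the perturbation chosen.

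The second task is to verify that if $\sigma$ bounds via a depth-one chain $W : [0,1] \times P \to \Be_0$, then the image cycle is null-bordant. After extending the perturbation argument above to the chain $W$ (rel boundary, using that the boundary perturbation is already transverse), one forms $W \times_{\Be_0} Z$. By the previous theorem this is again lc-smooth in $\Be_1$, and its boundary is exactly $\partial W \times_{\Be_0} Z = \sigma \times_{\Be_0} Z$, exhibiting the null-bordism. Linearity of the construction under disjoint union is immediate, so the map descends to $\Bord_k(F) : \Bord_\ell(\Be_0,\CSD_0) \to \Bord_{\ell+k}(\Be_1,\CSD_1)$.

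The main technical obstacle I expect is the lc-structure near the boundary of $W \times_{\Be_0} Z$. The chain $W$ itself has only the weak lc-smoothness prescribed in the definition (first derivative in the boundary-parallel directions continuous up to $\partial W$), so one must check that fibre-producting with the smooth Hilbert manifold $Z$ preserves this regularity, and that an lc-chart on $W$ pulls back to an lc-chart on the fibre product. This should reduce to the implicit function theorem applied to the map $W \times Z \to \Be_0$ given by $(w,z) \mapsto \pi_0(f(z)) - W(w)$, using transversality of its restrictions to both strata $W - \partial W$ and $\partial W$ to produce compatible charts, but the careful bookkeeping of norms in the lc-chart coordinates will require the same kind of analysis as in Section \ref{Triv}.
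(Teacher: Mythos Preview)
Your proposal is correct but takes a longer route than the paper. The key point you are missing is that Axiom~$5'$ does more than feed into a Sard--Smale argument: together with the Fredholm part of Axiom~$4'$ and Axiom~$4$ for $\sigma$, it forces the fibre product $P\times_{\Be_0}Z$ to be transverse \emph{automatically}, for every chain $\sigma$. This is exactly what is established in the proof of the preceding theorem, where the combined image of $Df$ and $D\sigma$ in $T\Be_0$ is shown to be closed (by Fredholmness) and dense (by Axiom~$5'$), hence all of $T\Be_0$. Consequently no perturbation is needed at all.

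With transversality free, the paper's proof collapses to the single observation
\[
\partial(\sigma\times_{\Be_0}F)=\partial\sigma\times_{\Be_0}F
\]
since $\partial F=\emptyset$. This simultaneously shows that cycles map to cycles and that a bounding chain $W$ for $\sigma$ yields a bounding chain $W\times_{\Be_0}F$ for $\sigma\times_{\Be_0}F$. Your worries about the lc-structure on $W\times_{\Be_0}Z$ are likewise already handled: the preceding theorem asserts that the fibre product of a chain with a correspondence is a chain, and ``chain'' by definition includes the lc-manifold structure and lc-smoothness of the map. So the delicate analysis you anticipate from Section~\ref{Triv} is not required here; that section deals with a genuinely harder situation where the correspondence itself is being degenerated.
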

\begin{proof}
Given a cycle $\sigma:P\rightarrow \Be_0$ we have $\partial(\sigma\times_{\Be_0} F)=\partial(\sigma)\times_{\Be_0}F$ since $\partial F=\emptyset$  This shows that $\Bord_k(F)$ commutes with the boundary operator.
\end{proof}

\section{The Trivial Cobordism}
\label{Triv}
\subsection{$L^2_1$ Compactness for a Holomorphic Cylinder}
In this section we explain how a holomorphic cylinder gives rise to a correspondence.  In fact, we will demonstrate that this correspondence induces identity on Floer bordism.    The intuition behind the proof is the observation that a trivial cobordism is the analogue of a gradient flow in finite dimensions and letting the flow time shrink to zero induces the identity map on the underlying manifold.  In our infinite dimensional setting the initial value problem is not well-defined and thus our homological argument is meant as a substitute notion.\\\\
Take $S^1$ to be the standard circle of length $2\pi$.  Let $Z_T=[0,T]\times S^1$ be the cylinder with coordinates $(t,\theta)$ and complex structure $j(\partial_{t})=\partial_\theta$.   Let $H:\C^{n}\times S^1 \rightarrow \Ar$ be a Hamiltonian with associated vector field $X_H=J\circ \nabla H$.  Given an $L^2_1$-map $u:Z_T\rightarrow \Ar^{2n}$ we define the energy to be $$E(u)=\frac{1}{2} \int_0^T \int_0^{2\pi} |u_t|^2+|u_\theta-X_H(u,t)|^2d\theta ds$$
The upward gradient flow of $\nabla{\CSD_H}$ is: $$\partial_t u(t,\tet)=\nabla{\CSD_H}(u(t,\tet))=-J\partial_\theta u(t,\tet)-\nabla H(u(t,\tet))$$  We write this as a perturbed $J$-holomorphic curve equation as:
\begin{equation}
\label{JCurv}
\partial_t u(t,\tet)+J(\partial_\theta u(t,\tet)-J\circ \nabla H(u(t,\tet)))=0
\end{equation}
  Therefore, for a $u:[0,T]\times S^1\rightarrow \C^{n}$ satisfying the perturbed holomorphic curve equation we have: $$\CSD_H(u(T,\cdot))-\CSD_H(u(0,\cdot))=E(u)$$

Consider $X_H(u,t)=c\cdot u+X_{H_c}(u,t)$ where $$X_{H_c}(u,t):\Ar^{2n}\times S^1 \rightarrow \Ar^{2n}$$ is $C^1$ and compactly
supported and $c\in \sqrt{-1}\Ar$.
\begin{lemma}
 $X_{H_c}(u,t)$ is continuous in $u$ for the $L^2$ topology.
\end{lemma}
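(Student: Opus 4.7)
The plan is to exploit the fact that $X_{H_c}$, being $C^1$ with compact support on $\Ar^{2n}\times S^1$, is globally Lipschitz in its first variable uniformly in $t$. Concretely, since $\operatorname{supp}(X_{H_c})$ is compact, the matrix-valued derivative $D_u X_{H_c}(u,t)$ is bounded in operator norm by some constant $L$. Outside the support $X_{H_c}$ vanishes identically, so the mean value theorem applied on the convex hull of any two points gives the pointwise estimate
\[
|X_{H_c}(v_1,t)-X_{H_c}(v_2,t)|\leq L|v_1-v_2|\qquad \text{for all }v_1,v_2\in\Ar^{2n},\ t\in S^1.
\]

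First I would record this uniform Lipschitz bound and note as a consequence that $\|X_{H_c}\|_{\infty}<\infty$, so that for any measurable $u:Z_T\to\Ar^{2n}$ the composition $X_{H_c}(u(\cdot),\cdot)$ is automatically bounded and measurable, hence lies in $L^2(Z_T;\Ar^{2n})$.

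Second, given $L^2$-convergent $u_i\to u$ (on $Z_T$, or equivalently on $S^1$ for a loop, the argument is identical), I would apply the pointwise Lipschitz estimate with $v_1=u_i(t,\theta)$ and $v_2=u(t,\theta)$ and square-integrate:
\[
\|X_{H_c}(u_i,\cdot)-X_{H_c}(u,\cdot)\|_{L^2}^2\;\leq\; L^2\int_{Z_T}|u_i-u|^2\,dt\,d\theta \;=\; L^2\,\|u_i-u\|_{L^2}^2.
\]
This shows the map $u\mapsto X_{H_c}(u,\cdot)$ is in fact Lipschitz (and thus a fortiori continuous) from $L^2$ to $L^2$, which is the claim.

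There is no real obstacle here: the compact support of $X_{H_c}$ converts $C^1$-regularity into a global Lipschitz bound, and the composition-on-$L^p$ estimate follows by a one-line integration. The only thing worth flagging is that compact support is genuinely used, both to obtain the uniform bound on $DX_{H_c}$ and to ensure that $X_{H_c}(u,\cdot)$ is $L^2$ without any growth hypothesis on $u$; without it one would be forced to split $X_H=c\cdot u+X_{H_c}$ precisely as in the preceding setup and treat the linear piece separately, which is why the decomposition was introduced in the first place.
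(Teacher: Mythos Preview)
Your argument is correct and is essentially identical to the paper's own proof: the paper also deduces a pointwise Lipschitz bound $|X_{H_c}(v,t)-X_{H_c}(v',t)|\leq C|v-v'|$ from compact support, squares it, and integrates to obtain $\|X_{H_c}(u_1,\cdot)-X_{H_c}(u_2,\cdot)\|_{L^2}\leq C\|u_1-u_2\|_{L^2}$. Your additional remarks (mean value theorem, $L^\infty$ bound ensuring $X_{H_c}(u,\cdot)\in L^2$) are reasonable fleshing-out but do not change the route.
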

\begin{proof}
 Since $X_{H_c}(u,t)$ has compact support, we have $$|X_{H_c}(v,t)-X_{H_c}(v',t)|\leq C|v-v'|$$  Therefore, $$|X_{H_c}(v,t)-X_{H_c}(v',t)|^2\leq C^2|v-v'|^2$$
Given $u_1,u_2:S^1\rightarrow \Ar^{2n}$, we have $$\int|X_{H_c}(u_1,t)-X_{H_c}(u_2,t)|^2dt\leq C^2\int|u_1-u_2|^2dt=C^2||u_1-u_2||_{L^2}^2$$
\end{proof}

\begin{theorem}
\label{compact}
Assume $c\notin \sqrt{-1}\mathbb{Z}$.  Given  a sequence with $E(u_i)<C$, the $u_i$  are uniformly $L^2_1$ bounded and thus weakly precompact.  Given a weakly convergent
sequence $u_i$, we have $E(u_\infty)\leq \liminf E(u_i)$.  If $\lim E(u_i)=E(u_\infty)$,
the $u_i$ converge strongly in $L^2_1$ to $u_\infty$. Finally, if $c\in \sqrt{-1}\mathbb{Z}$,
the theorem applies if we furthermore assume  $||u_i(0,\theta)||_{L^2} < C'$.
\end{theorem}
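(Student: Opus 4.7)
The plan is to reduce the theorem to a Fourier-mode analysis of the constant-coefficient elliptic operator underlying \eqref{JCurv} and then handle the weak-to-strong compactness in standard fashion. I split the argument into three steps: (A) an a priori $L^2_1$ bound on $u_i$ from the energy bound via mode-by-mode estimates that use the spectral hypothesis $c \notin \sqrt{-1}\mathbb{Z}$; (B) lower semicontinuity of $E$ under weak $L^2_1$ convergence; (C) upgrading energy equality to strong $L^2_1$ convergence.

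For (A), rewrite \eqref{JCurv} as $Lu = g(u,t)$ where $L = \partial_t + J\partial_\theta - Jc$ is constant-coefficient elliptic and $g(u,t) = JX_{H_c}(u,t)$ is uniformly bounded in $L^\infty(Z_T)$ because $X_{H_c}$ is $C^1$ and compactly supported; in particular $\|g(u_i,\cdot)\|_{L^2(Z_T)}$ is uniformly bounded. Setting $r = -Jc \in \Ar$ and expanding $u_i(t,\theta) = \sum_k u_{i,k}(t) e^{ik\theta}$, the equation decouples mode by mode into $u_{i,k}'(t) - (k-r)\,u_{i,k}(t) = g_{i,k}(t)$. The triangle inequality $|(k-r)u_{i,k}| \leq |u_{i,k}'| + |g_{i,k}|$, squared and integrated over $[0,T]$, yields $(k-r)^2 \|u_{i,k}\|_{L^2}^2 \leq 2\|u_{i,k}'\|_{L^2}^2 + 2\|g_{i,k}\|_{L^2}^2$. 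Summing over $k$ and using the Parseval-type identity $\sum_k \|u_{i,k}'\|_{L^2}^2 \leq C \|\partial_t u_i\|_{L^2(Z_T)}^2 \leq 2C\,E(u_i)$ together with the uniform $L^2$ bound on $g(u_i,\cdot)$, I obtain $\sum_k (k-r)^2 \|u_{i,k}\|_{L^2([0,T])}^2 \leq K$. The spectral hypothesis enters here: $c \notin \sqrt{-1}\mathbb{Z}$ is equivalent to $\delta := \mathrm{dist}(r,\mathbb{Z}) > 0$, so $(k-r)^2 \geq \delta^2$ uniformly and hence $\|u_i\|_{L^2(Z_T)}$ is bounded. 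The elementary comparison $k^2 \leq 2(k-r)^2 + 2r^2$ then bounds $\|\partial_\theta u_i\|_{L^2(Z_T)}$, and combined with the energy bound on $\|\partial_t u_i\|_{L^2}$ we conclude $\|u_i\|_{L^2_1(Z_T)} \leq K'$, hence weak $L^2_1$-precompactness.

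For (B), expand $2E(u) = \int |\partial_t u|^2 + \int |\partial_\theta u|^2 - 2\int \langle \partial_\theta u, X_H(u,t)\rangle + \int |X_H(u,t)|^2$. The first two terms are weakly $L^2_1$-lower semicontinuous. By Rellich, $L^2_1(Z_T) \hookrightarrow L^2(Z_T)$ compactly, so $u_i \to u_\infty$ strongly in $L^2$; the preceding lemma's Lipschitz estimate (together with linearity of the $cu$ piece) then gives $X_H(u_i,\cdot) \to X_H(u_\infty,\cdot)$ strongly in $L^2(Z_T)$, so the last two terms pass to the limit continuously, yielding $E(u_\infty) \leq \liminf E(u_i)$. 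For (C), when $E(u_i) \to E(u_\infty)$ the continuously-converging terms force both $\|\partial_t u_i\|_{L^2}^2$ and $\|\partial_\theta u_i\|_{L^2}^2$ to converge to the corresponding limits; in Hilbert space, weak convergence together with norm convergence implies strong convergence, so $\partial_t u_i \to \partial_t u_\infty$ and $\partial_\theta u_i \to \partial_\theta u_\infty$ strongly in $L^2$, which with strong $L^2$ convergence of $u_i$ yields $L^2_1$ convergence.

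In the degenerate case $c \in \sqrt{-1}\mathbb{Z}$ (i.e.\ $r \in \mathbb{Z}$), the estimate of step (A) fails only for the resonant mode $k = r$: the ODE reduces to $u_{i,r}' = g_{i,r}$, and the weighted bound $\sum_k (k-r)^2 \|u_{i,k}\|^2$ carries no information about $u_{i,r}$. The extra hypothesis $\|u_i(0,\cdot)\|_{L^2} \leq C'$ bounds $|u_{i,r}(0)|$, after which direct integration $u_{i,r}(t) = u_{i,r}(0) + \int_0^t g_{i,r}(s)\,ds$ supplies the missing $L^2$ bound; all other modes proceed as before. The main obstacle is the bookkeeping of step (A)---matching the Fourier-weighted estimate $\sum (k-r)^2 \|u_{i,k}\|^2$ to the desired $L^2_1(Z_T)$ bound via the spectral gap $\mathrm{dist}(r,\mathbb{Z}) > 0$---and it is precisely this step that forces the extra hypothesis in the resonant case.
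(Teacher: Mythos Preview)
Your step (A) derives the key estimate $\sum_k (k-r)^2 \|u_{i,k}\|_{L^2}^2 \leq K$ by invoking the PDE \eqref{JCurv}: you write the mode-by-mode ODE $u_{i,k}'(t) - (k-r)\,u_{i,k}(t) = g_{i,k}(t)$ and bound $(k-r)u_{i,k}$ via the triangle inequality. But the theorem as stated does \emph{not} assume the $u_i$ are solutions---it is a statement about arbitrary $L^2_1$ maps with bounded energy, and the paper's proof never uses the equation. Without the PDE there is no ODE relating $u_{i,k}'$ to $u_{i,k}$, so this step is unjustified. The same issue recurs in your resonant-mode argument, where you integrate $u_{i,r}' = g_{i,r}$.

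The fix is immediate and stays within your Fourier framework: the energy directly gives $\|u_{i,\theta} - cu_i - X_{H_c}(u_i)\|_{L^2}^2 \leq 2E(u_i)$, and since $X_{H_c}$ is $L^\infty$-bounded this controls $\|u_{i,\theta} - cu_i\|_{L^2}^2$, which in Fourier (in $\theta$ only) is exactly $\sum_k |ik - c|^2\, \|u_{i,k}\|_{L^2}^2$. From there your spectral-gap argument proceeds unchanged, and the repaired argument is precisely the paper's: the isomorphism $\partial_\theta + c : L^2_1(S^1) \to L^2(S^1)$ for $c \notin \sqrt{-1}\mathbb{Z}$ is your statement $\delta = \mathrm{dist}(r,\mathbb{Z}) > 0$, and the paper's reduction of the $X_{H_c} \neq 0$ case via the pointwise inequality $|(u_\theta - cu) - X_{H_c}|^2 \geq \tfrac{1}{2}|u_\theta - cu|^2 - |X_{H_c}|^2$ plays the same role as your $L^\infty$ bound on $g$. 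For the resonant case the paper likewise avoids the PDE, using only the energy bound on $\|u_t\|_{L^2}$ together with $\|u(0,\cdot)\|_{L^2} \leq C'$ and the elementary estimate $\bigl| \|u(\tau,\cdot)\|_{L^2} - \|u(0,\cdot)\|_{L^2} \bigr| \leq \tau^{1/2}\|u_t\|_{L^2}$ to bound $\|u\|_{L^2}$. Your steps (B) and (C) are correct and coincide with the paper's argument.
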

\begin{proof} We first prove the theorem when $X_{H_c}=0$. When $c\notin \sqrt{-1} \mathbb{Z}$
 we have an isomorphism $$\partial_\theta+c:L^2_1(S^1)\rightarrow L^2(S^1)$$ Thus,
 $$\frac{1}{2} \int_0^T \int_0^{2\pi} |u_t|^2+|u_\theta-X_H(u,t)|^2d\theta ds=const\cdot ||u||_{L^2_1}^2$$
Thus, the energy is equivalent to the $L^2_1$ norm from which everything follows.
When $c \in \sqrt{-1} \mathbb{Z}$ a special argument is needed.
Since we have $$\int_0^T \int_0^{2\pi} |u_t|^2d\theta dt < C$$ we get
 $$||u(0,\theta)|_{L^2}-|u(\tau,\theta)|_{L^2}|\leq \tau^{1/2}||u_t||_{L^2}$$ for all $\tau \in [0,T]$.
 This, together with the $L^2$ bound on $u(0,\theta)$, implies an $L^2$ bound on $u$.
  The bound on $u_\theta$ follows since  we have bounds on $u_t$ and $X_H(u)=c\cdot u$.
  Now, assume $u_i$ are weakly convergent.  We may rewrite the energy as $$\frac{1}{2} \int_0^T \int_0^{2\pi} |u_{i,t}|^2+|u_{i,\theta}|^2+|c\cdot u_i|^2+2\langle u_{i,\theta},c\cdot u_i \rangle d\theta ds$$  Weak $L^2_1$ convergence of $u_i$ implies $\int_0^T \int_0^{2\pi} \langle u_{i,\theta},c\cdot u_i \rangle d\theta ds$ converges to $$\int_0^T \int_0^{2\pi} \langle u_{\infty,\theta},c\cdot u_\infty \rangle d\theta ds$$ and thus lower semicontinuity follows from that of the $L^2_1$ norm.
The case $X_{H_c}\neq 0$ is a slight modification of the argument.
We observe that $$|(u_\theta-c\cdot u)-X_{H_c}(u,t)|^2= |u_\theta-c\cdot u|^2+|X_{H_c}(u,t)|^2+2\langle u_\theta-c\cdot u, X_{H_c}(u,t) \rangle$$
Hence $$|(u_\theta-c\cdot u)-X_{H_c}(u,t)|^2 \geq |u_\theta-c\cdot u|^2+|X_{H_c}(u,t)|^2-2|\langle u_\theta-c\cdot u,X_{H_c}(u,t) \rangle|\geq$$ $$|u_\theta-c\cdot u|^2+|X_{H_c}(u,t)|^2-|u_\theta-c\cdot u|^2/2-2|X_{H_c}(u,t)|^2=1/2|u_\theta-c\cdot u|^2-|X_{H_c}(u,t)|^2 $$  Since we have an $L^\infty$ bound on $X_{H_c}(u,t)$, a bound on $E(u)$ is the same as a bound on $$\frac{1}{2} \int_0^T \int_0^{2\pi} |u_t|^2+|u_\theta-c\cdot u|^2d\theta ds$$  In addition, since $X_{H_c}(u,t)$ is continuous for the $L^2$ topology, the $\lim_{i\rightarrow \infty}E(u_i) =E(u_\infty)$  exactly when $$\lim_{i\rightarrow \infty}\frac{1}{2} \int_0^T \int_0^{2\pi} |u_{i,t}|^2+|u_{i,\theta}-c\cdot u_i|^2d\theta ds =\frac{1}{2} \int_0^T \int_0^{2\pi} |u_{\infty,t}|^2+|u_{\infty,\theta}-c\cdot u_\infty|^2d\theta ds$$

\end{proof}
\textbf{Remark.}  In the case $c\in \sqrt{-1}\mathbb{Z}$ the assumption on $u$ may seem artificial.  However, this assumption
is exactly met when describing the axioms of a correspondence.  Therefore, the holomorphic maps on a cylinder will always
give rise to a correspondence under the above assumptions.\\\\
Let $M_T$ be the moduli space of solutions to equation $\ref{JCurv}$ on a cylinder of length $T$.   Restriction maps to $0\times S^1$ and $T\times S^1$ induce maps $\Rest_0:M_T\rightarrow \Be$, $\Rest_T:M_T\rightarrow \Be$.

\begin{lemma}
Given $T>0$, $Z_T$ together with the restriction maps define a correspondence $$\Rest_0\times \Rest_T:M_T\rightarrow \Be\times -\Be$$
\end{lemma}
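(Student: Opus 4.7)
The plan is to verify the five axioms of a correspondence in turn, exploiting Theorem~\ref{compact} for the analytic content and standard elliptic theory of the linearized Cauchy-Riemann operator on the cylinder for the linear content. The central analytic input is the energy identity $E(u) = \CSD_H(u(T,\cdot)) - \CSD_H(u(0,\cdot))$ for solutions of \eqref{JCurv}, together with $E(u) \geq 0$.

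For Axiom $1'$, I would rewrite $\CSD_1 - \CSD_0$ along $\image f$ in terms of $E(u)$ and a boundary value of $\CSD_H$; both the lower bound and the weak lower semi-continuity then reduce to the energy semi-continuity supplied by Theorem~\ref{compact}. For Axiom $2'$, the hypothesis that $\pi_0(z_i) = u_i(0,\cdot)$ is weakly precompact gives a uniform $L^2_{1/2}$ bound, hence control on $\CSD_H(u_i(0,\cdot))$; combined with the upper bound on $\CSD_1(\pi_1(z_i))$, this yields a uniform energy bound. Theorem~\ref{compact} (applied with $c = 2(1+\ep)i$ and the compactly-supported $C^1$ correction $X_{H_c}$) then produces uniform $L^2_1$ bounds on $u_i$ over $Z_T$, so the traces $u_i(T,\cdot)$ are weakly precompact. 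Axiom $3'$ follows from the strong-convergence clause of Theorem~\ref{compact}: convergence of energies together with strong convergence at the incoming boundary forces strong $L^2_1$ convergence of $u_i$ on all of $Z_T$, and in particular of the outgoing traces.

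For Axioms $4'$ and $5'$, the plan is to analyze the linearization $L = \partial_t + J\partial_\theta + K$ of \eqref{JCurv}, where $K$ is the zero-order perturbation coming from the Hessian of $H$. Decomposing in the Fourier basis of $-J\partial_\theta$, the unperturbed operator acts on the $n$-th mode as $\partial_t - n$, so modes with $n > 0$ (contributing to $T^+\Be$) grow in $t$ and modes with $n \leq 0$ (contributing to $T^-\Be$) decay. APS-type reasoning then shows that specifying $(\pi_0^+ v(0,\cdot), \pi_1^- v(T,\cdot))$ determines $v \in \ker L$ up to finite-dimensional defects, yielding the Fredholm claim; the zero-order term $K$ is a compact perturbation preserving this. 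The second clause of Axiom $4'$ is a Rellich compactness for the decaying modes once the growing modes are controlled. For Axiom $5'$, finite Fourier polynomials at the boundary lift explicitly to elements of $\ker L$ by mode-by-mode exponential extension, and such polynomials are dense in $T\Be$; since $M_T$ has empty boundary, the second density clause is vacuous.

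The main obstacle is Axiom $4'$: one must match the APS-type boundary conditions to the polarizations on the two ends consistently, so that the prescribed projections give a genuinely Fredholm operator of finite index, and not an infinite-dimensional kernel arising from a sign mismatch. The compactness statement in the second sentence of Axiom $4'$ then requires a Rellich argument adapted to this same APS setup.
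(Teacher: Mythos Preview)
Your plan is correct and is essentially the paper's own proof spelled out: the paper's one-sentence argument cites Theorem~\ref{compact} (which you invoke for Axioms~$1'$--$3'$) together with the unique continuation property (which underlies the density in Axiom~$5'$, where you instead give the equivalent explicit Fourier-mode extension). Your flagging of the sign/polarization matching at the two ends in Axiom~$4'$ is exactly the care the paper's terse proof leaves implicit.
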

\begin{proof}
This follows from the compactness theorem above together with the unique continuation property.  
\end{proof}

\subsection{APS Boundary Value Problem }
Consider $D=\partial_t+L$ where $L=J\partial_\theta$ is a first order elliptic differential operator acting on $\C^n$ valued maps on $S^1$.  We have the APS boundary value problem: $$(D_\ep,\Pi^-_{L}\circ
r_\ep-\Pi^+_{L}\circ r_0):L^2_1([0,\ep]\times S^1,\C^n)\rightarrow L^2([0,\ep]\times Y,\C^n)\oplus L^2_{1/2}(Y,\C^n)$$
Where $\Pi^+$ is the spectral projection to the nonnegative part of the spectrum of $L$ and $\Pi^-$ is the projection to the negative part.\\\\
Thus, given $\alpha \in L^2_1([0,\ep]\times S^1,\C^n)$ the boundary data is specified by $\beta=\beta^+_0+\beta^-_\ep$ with
$\beta^+_0=-\Pi^+_{L}\circ r_0 (\alpha)$ and $\beta^-_\ep=\Pi^-_{L}\circ r_\ep (\alpha)$ where $r$ denotes restriction.  We have  the following lemma:

\begin{lemma}
$D_{\epsilon}$ is an isomorphism with inverse $P_\epsilon\oplus Q_\epsilon$ where $$P_\epsilon\oplus Q_\epsilon:
L^2([0,\ep]\times S^1,\C^n)\oplus L^2_{1/2}(S^1,\C^n)\rightarrow L^2_1([0,\ep]\times S^1,\C^n)$$ There exists $C>0$ such that
$||P_\epsilon||\leq  C $ and $||Q_{\epsilon}||\leq C$, independent of $\epsilon$. \\ We have $$||P_\epsilon(a)_{|\partial(
[0,\ep]\times S^1)}||_{L^2_{1/2}}\leq C||a||_{L^2}$$ and  $$||Q_\ep(b)_{|\partial([0,\ep]\times S^1)}-b||^2_{L^2_{1/2}}\leq 2\int_{[0,\ep]\times S^1 }|\partial_t
Q_\ep(b)|^2$$
\end{lemma}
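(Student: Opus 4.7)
The plan is to diagonalize $L = J\partial_\theta$ on $L^2(S^1, \C^n)$ and reduce the boundary value problem to an uncoupled family of first-order ODEs on $[0,\ep]$. The operator $L$ is self-adjoint with discrete spectrum $\{\lambda_k\}$ and orthonormal eigenbasis $\{v_k\}$; expanding $u(t,\theta) = \sum_k f_k(t) v_k(\theta)$ converts $Du = g$ into scalar ODEs $f_k' + \lambda_k f_k = g_k$. The APS condition assigns exactly one boundary condition per mode (at $t = 0$ when $\lambda_k \geq 0$, at $t = \ep$ when $\lambda_k < 0$), and each ODE then admits an explicit solution by variation of parameters with exponential kernel; assembling these constructs $P_\ep$ and $Q_\ep$ mode by mode and shows that $D_\ep$ is an isomorphism.

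\textbf{Uniform operator bounds.} For $Q_\ep$ (zero source), the free solution in each mode is $f_k(t) = -b_k^+ e^{-\lambda_k t}$ when $\lambda_k \geq 0$, with the analogous formula anchored at $t = \ep$ when $\lambda_k < 0$. A direct computation of $\int_0^\ep (|f_k|^2 + |f_k'|^2 + \lambda_k^2 |f_k|^2)\,dt$ gives $\|f_k\|_{L^2_1}^2 \leq C|\lambda_k||b_k|^2$, uniformly in $\ep$, and summing over $k$ reproduces $\|b\|_{L^2_{1/2}}^2$. For $P_\ep$ (zero boundary data), each $f_k$ is a convolution of $g_k$ with an exponential kernel whose $L^1$-norm is at most $1/|\lambda_k|$; Cauchy--Schwarz gives $\|f_k\|_{L^2} \leq \|g_k\|_{L^2}/|\lambda_k|$, and combined with $f_k' = -\lambda_k f_k + g_k$ this yields $\|f_k\|_{L^2_1} \leq C\|g_k\|_{L^2}$. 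The finite-dimensional kernel of $L$ is estimated separately and contributes only an additional constant once $\ep$ is bounded, say $\ep \leq 1$.

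\textbf{Boundary trace estimates and main obstacle.} For $P_\ep(a)$, the mode-wise solution vanishes on one boundary component; at the other, Cauchy--Schwarz on the integral representation gives $|f_k(\ep)|^2 \leq \|g_k\|_{L^2}^2/(2\lambda_k)$ when $\lambda_k > 0$, and multiplying by the $L^2_{1/2}$-weight $\lambda_k$ and summing yields the first estimate. For $Q_\ep(b)$, a mode-by-mode computation is the cleanest route: with the APS-compatible identification of $b$ as an element of $L^2_{1/2}(\partial([0,\ep]\times S^1))$, the discrepancy $Q_\ep(b)|_\partial - b$ has weighted norm-squared $\sum_k |\lambda_k||b_k|^2 (1 - e^{-|\lambda_k|\ep})^2$, while a parallel computation gives $\int_{[0,\ep]\times S^1}|\partial_t Q_\ep(b)|^2 = \tfrac12\sum_k |\lambda_k||b_k|^2(1 - e^{-2|\lambda_k|\ep})$. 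The desired inequality then follows termwise from the elementary fact $(1-x)^2 \leq 1 - x^2$ for $x = e^{-|\lambda_k|\ep} \in [0,1]$. The principal bookkeeping difficulty is precisely in fixing this identification so that the two sides live in the same Hilbert space and the elementary inequality produces exactly the factor of $2$; once the sign conventions coming from the APS boundary operator $\Pi^-r_\ep - \Pi^+ r_0$ are tracked consistently, the remaining analysis is routine spectral calculus.
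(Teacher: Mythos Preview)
Your approach is essentially the same as the paper's: diagonalize $L=J\partial_\theta$ via its eigenbasis $\{\phi_\lambda\}$, write down the explicit exponential solutions for $P_\ep$ and $Q_\ep$ mode by mode, and for the last estimate use the elementary inequality $(1-e^{-\lambda\ep})^2 \le 1-e^{-2\lambda\ep}$.  The only notable difference is in the $P_\ep$ estimates: where you bound $\|f_k\|_{L^2}$ via the $L^1$-norm of the exponential kernel and then bootstrap to $f_k'$ and the trace separately, the paper instead expands
\[
\|g_\lambda\|_{L^2}^2 \;=\; \int_0^\ep |D_\ep(h\phi_\lambda)|^2 \;=\; \lambda^2\!\int_0^\ep |h|^2 + \int_0^\ep |\partial_t h|^2 + \lambda\,|h(\ep)|^2,
\]
obtained by integration by parts using $h(0)=0$, which delivers the $L^2_1$-bound and the boundary $L^2_{1/2}$-trace bound in a single stroke with sharp constants.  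Both routes are correct; the paper's identity is a bit slicker and avoids the separate Young/Cauchy--Schwarz bookkeeping.
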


\begin{proof}
$P_\ep$:  Take $\{\phi_\lambda\}$ an orthonormal eigenbasis for $L^2(S^1,\C^n)$.  We may write any element in
$L^2([0,\ep]\times (S^1,\C^n)$ as a sum $\sum_{\lambda}g_\lambda(t) \phi_\lambda$ with $g_\lambda \in L^2$.
 For $\lambda>0$ define $$P_\epsilon(g_\lambda \phi_\lambda)=\phi_\lam \int_0^te^{-\lambda(t-\tau)}g_\lambda(\tau)d\tau
 =h(t)\phi_\lam $$

 We compute the $L^2_1$ norm of $h(t)\phi_\lambda$.  Since $\lambda>0$ is bounded below (independently of $\ep$) we can
 use $\lambda^2\int_0^\ep |h|^2  + \int_0^\ep |\partial_t h|^2$ to compute the square of the $L^2_1$ norm of $h(t)\phi_\lambda$.  We have
 $$\int_{[0,\ep]\times Y}|D_\ep(h\phi_\lambda)|^2=\lambda^2\int_{0}^\ep |h|^2  + \int_0^\ep |\partial_t h|^2+\lambda |h(\ep)|^2$$
 This bounds the $L^2_1$ norm of $h(t)\phi_\lambda$ as well as the $L^2_{1/2}$ norm of $h(\ep)\phi_\lambda$ in terms of
 $||g\phi_\lambda||_{L^2}$. \\

$Q_\epsilon$: We have an explicit formula for the inverse:

$$Q_\epsilon(\phi_\lambda)=-e^{- t\lambda}\phi_\lambda, \lambda>0$$
$$Q_\epsilon(\phi_\lambda)=e^{-(t-\ep)\lambda}\phi_\lambda, \lambda\leq 0$$

\noindent  Lets consider the case $\lambda>0$.  We have $D_\epsilon \circ Q_\epsilon=0$.  The $L^2$-norm of
$Q_\epsilon(\phi_\lambda)$ is bounded by 1 since
  $e^{- t\lambda}\leq 1$.  We have $\partial_t(Q_\epsilon(\phi_\lambda))=-\lambda Q_\epsilon(\phi_\lambda)$ thus,
  $$\int_{[0,\ep]\times S^1}|\partial_t(Q_\epsilon(\phi_\lambda))|^2=\frac{\lambda(1-e^{-2\lambda\epsilon })}{2}$$

  Since $\partial_t\circ Q_\epsilon=L\circ Q_\epsilon$ and the $L^2_{1/2}$-norm of $\phi_\lambda$ is $|\lambda|^{1/2}$
  this establishes the desired bound. Finally, note that $$||\phi_\lambda-Q_\ep(\phi_\lambda)_{|\{\ep\}\times
  S^1}||^2_{L^2_{1/2}}=\lambda(1-
  e^{-\ep\lambda})^2 \leq \lam (1-e^{-2\ep \lam})= 2\int |\partial_t(Q_\ep(\phi_\lambda))|^2$$
  \end{proof}

In applications, we will have a nonlinear term for which we use the following lemmas:

\begin{lemma}
Given $f \in L^2_1([0,\epsilon]\times S^1,\C^n)$, vanishing on one of the ends, we have $||f||_{L^4} \leq
C||f||_{L^2_1}$ with $C$ independent of $\epsilon$.

\end{lemma}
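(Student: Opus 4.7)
The plan is to extend $f$ to the fixed ambient manifold $\mathbb{R}\times S^1$ by reflection followed by zero extension, and then invoke the Sobolev embedding $L^2_1\hookrightarrow L^4$ on that fixed manifold, whose constant is manifestly independent of $\epsilon$.

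I will assume without loss of generality that $f$ vanishes at $t=0$ (the case $t=\epsilon$ is reduced to this one by the translation $t\mapsto\epsilon-t$). First I would reflect evenly across the right end, defining $\tilde f$ on $[0,2\epsilon]\times S^1$ by $\tilde f(t,\theta)=f(t,\theta)$ for $t\le\epsilon$ and $\tilde f(t,\theta)=f(2\epsilon-t,\theta)$ for $t\ge\epsilon$. Even reflection preserves $L^2_1$ regularity across the fold, doubles the $L^p$ norms, and doubles $\|\nabla\cdot\|_{L^2}^2$. Moreover $\tilde f$ now vanishes at both $t=0$ and $t=2\epsilon$, so extending by zero to all of $\mathbb{R}\times S^1$ produces a function $\hat f\in L^2_1(\mathbb{R}\times S^1)$ with $\|\hat f\|_{L^2_1}^2=2\|f\|_{L^2_1}^2$ and $\|\hat f\|_{L^4}^4=2\|f\|_{L^4}^4$.

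Next I would apply the Sobolev embedding $L^2_1(\mathbb{R}\times S^1)\hookrightarrow L^4(\mathbb{R}\times S^1)$ with a universal constant $C_0$: this is standard and follows by covering $\mathbb{R}\times S^1$ with a uniformly locally finite atlas of coordinate charts diffeomorphic to a fixed bounded open set in $\mathbb{R}^2$, applying the standard $2$-dimensional Euclidean Sobolev inequality in each chart via a partition of unity with uniformly bounded derivatives, and summing using the bounded overlap. Chasing the constants produced by the two norm doublings gives $\|f\|_{L^4}=2^{-1/4}\|\hat f\|_{L^4}\le 2^{-1/4}C_0\|\hat f\|_{L^2_1}=2^{1/4}C_0\|f\|_{L^2_1}$, so the stated inequality holds with $C=2^{1/4}C_0$, independent of $\epsilon$.

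The main subtlety will be checking that reflection followed by zero extension actually produces a function in $L^2_1(\mathbb{R}\times S^1)$, i.e.\ that no distributional jump is introduced at the two gluing interfaces. This rests on the hypothesis that $f$ vanishes on one end, interpreted as vanishing of its $L^2_{1/2}$ boundary trace: that trace condition is exactly what ensures $\tilde f$ has zero trace at both $t=0$ and $t=2\epsilon$, which is what is needed for the zero extension across those faces to remain in $L^2_1$. Everything else in the argument is routine bookkeeping.
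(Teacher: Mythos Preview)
Your proof is correct but takes a different route from the paper. The paper argues directly in the style of the Gagliardo--Nirenberg/Ladyzhenskaya inequality: it reduces to a function supported in a rectangle in $\mathbb{R}^2$ vanishing on two adjacent sides, uses the fundamental theorem of calculus in each coordinate direction to bound $|f(x,y)|$ pointwise by line integrals of $|\partial_1 f|$ and $|\partial_2 f|$, multiplies these two bounds and integrates. This yields the sharper estimate $\|f\|_{L^4}^4 \leq \epsilon\,\|\nabla f\|_{L^2}^4$, so the constant actually tends to zero as $\epsilon\to 0$ rather than merely staying bounded. Your reflection-then-zero-extension argument is cleaner and more conceptual---it reduces everything to the Sobolev embedding on the fixed ambient $\mathbb{R}\times S^1$ and avoids any computation---but it only delivers the uniform bound stated in the lemma, not the extra $\epsilon^{1/4}$ decay. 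Since the lemma as written only asks for $\epsilon$-independence, your argument is entirely adequate; the paper's approach is more elementary (it proves rather than invokes the embedding) and its sharper constant is a free bonus that can feed into the smallness needed for the later contraction-mapping step.
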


\begin{proof}
The argument reduces to that of a function on $R^2$ with support in the rectangle $[0,1]\times [0,\ep]$.
We assume $f$ vanishes on say $\{0\}\times [0,\ep]$ and $[0,1]\times \{0\}$.  We have $$|f(x,y)|\leq \int |\partial_1f(x',y)|dx'\leq \ep^{1/2}(\int|\partial_1f(x',y)|^2dx')^{1/2} $$ Similar estimate with $\partial_2$ implies
$$|f(x,y)|^4\leq \ep \int|\partial_1f(x',y)|^2dx' \int|\partial_2f(x,y')|^2dy' $$ Integrating, gives
$$\int|f(x,y)|^4dxdy \leq \ep \int|\partial_1f(x,y)|^2dxdy \int|\partial_2f(x,y)|^2dydx \leq \ep(\int |\nabla f(x,y) |^2 dxdy)^2  $$

\end{proof}

\begin{lemma} \label{l4lem}
Given $\beta \in L^2_{1/2}(Y,E)$ and $v\in L^2([0,\ep]\times Y,E)$ we have $$||Q_\ep(\beta)+P_\ep(v)||_{L^4}\leq
C||\beta ||_{L^2_{1/2}}+C||v||_{L^2}$$

\end{lemma}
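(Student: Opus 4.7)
The plan is to reduce to the previous $L^4$--$L^2_1$ Sobolev embedding, whose hypothesis requires the function to vanish on one of the ends $\{0\}\times S^1$ or $\{\ep\}\times S^1$. I would first split $\beta = \beta^+ + \beta^-$ and $v = v^+ + v^-$ along the spectral decomposition of $L$ (matched to the two cases in the formula for $Q_\ep$), so that by the $L^4$ triangle inequality it suffices to estimate each of the four pieces $P_\ep(v^{\pm})$ and $Q_\ep(\beta^{\pm})$ separately.

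The $P_\ep$ pieces are essentially immediate: the explicit formula $P_\ep(g_\lam \phi_\lam)(t) = \phi_\lam\int_0^t e^{-\lam(t-\tau)}g_\lam(\tau)\,d\tau$ shows that $P_\ep(v^+)$ vanishes identically at $t=0$, and the analogous formula for $\lam \le 0$ gives vanishing of $P_\ep(v^-)$ at $t=\ep$. The previous Sobolev lemma therefore applies to each, and together with the uniform $L^2_1$ bound $||P_\ep(v)||_{L^2_1} \le C ||v||_{L^2}$ from the preceding lemma, one gets $||P_\ep(v^{\pm})||_{L^4} \le C ||v||_{L^2}$.

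The $Q_\ep$ pieces need an extension-and-cutoff trick, because $Q_\ep(\beta^+)(0) = -\beta^+$ is nonzero. For $\lam>0$ the formula $-e^{-t\lam}\phi_\lam$ is well-defined and exponentially decaying for all $t \ge 0$, so I would set
\[
\widetilde Q(\beta^+)(t,\theta) = -\sum_{\lam>0}\hat\beta^+_\lam\, e^{-t\lam}\,\phi_\lam(\theta)
\]
on $[0,1]\times S^1$ and multiply by a fixed smooth cutoff $\chi(t)$ equal to $1$ on $[0, 1/2]$ and vanishing at $t=1$. Under the harmless assumption $\ep \le 1/2$ (the range $\ep \in [1/2, 1]$ being handled by direct Sobolev embedding on a domain of bounded size), $\chi \widetilde Q(\beta^+)$ agrees with $Q_\ep(\beta^+)$ on $[0,\ep]\times S^1$, vanishes at $t=1$, and an eigenfunction-by-eigenfunction computation using $||\phi_\lam||_{L^2_{1/2}}^2 = |\lam|$ together with $\int_0^1(1+\lam^2)e^{-2t\lam}\,dt \le C|\lam|$ yields $||\chi\widetilde Q(\beta^+)||_{L^2_1([0,1]\times S^1)} \le C||\beta^+||_{L^2_{1/2}}$. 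The previous lemma then gives $||Q_\ep(\beta^+)||_{L^4([0,\ep]\times S^1)} \le ||\chi\widetilde Q(\beta^+)||_{L^4([0,1]\times S^1)} \le C ||\beta^+||_{L^2_{1/2}}$, and the bound for $Q_\ep(\beta^-)$ is symmetric (extending to $t \in [-1, \ep]$ instead).

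The main step requiring care is the $\ep$-uniform control of the commutator term $\chi'\widetilde Q(\beta^\pm)$: because $\chi'$ is fixed independent of $\ep$ and $e^{-t|\lam|}$ decays fast, this contributes at most $C \sum |\hat\beta^\pm_\lam|^2/|\lam|$, which is dominated by $||\beta^\pm||_{L^2_{1/2}}^2$ as the relevant spectrum of $L$ is bounded away from zero (the zero mode, if present, is a finite-dimensional issue handled separately). Summing the four estimates yields the claim with $C$ independent of $\ep$.
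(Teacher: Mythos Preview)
Your argument is correct and follows the same overall strategy as the paper: split spectrally, use the vanishing-on-one-end Sobolev lemma for the $P_\ep$ pieces directly, and for the $Q_\ep$ pieces extend to a larger cylinder so as to manufacture the vanishing hypothesis. The one substantive difference is in how you implement that extension. You extend $Q_\ep(\beta^+)$ to $[0,1]\times S^1$ and multiply by a fixed cutoff $\chi$, which forces you to track the commutator term $\chi'\widetilde Q(\beta^+)$ and invoke the spectral gap to control it. The paper instead extends $Q_\ep(\beta_+)=-\sum_{\lambda>0}c_\lambda e^{-t\lambda}\phi_\lambda$ all the way to $[0,\infty)\times S^1$, observes that the $L^2_1$ norm of this extension is bounded by $||\beta_+||_{L^2_{1/2}}$ (indeed this was computed in the preceding lemma), and applies the Sobolev inequality on $[0,\infty)$ using the natural vanishing at $t=\infty$. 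This avoids any cutoff or commutator bookkeeping. The paper also makes the zero-mode case $\beta_0$ explicit as a separate finite-dimensional piece with $Q_\ep(\beta_0)=\beta_0$, whereas you fold it into a parenthetical; both are fine. Your route is a little more hands-on but equally valid, and has the minor advantage of staying on a bounded domain throughout.
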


\begin{proof}  Decompose $v=v_++v_-$ into positive (nonpositive) eigenvectors of $L$.  We have $$||P_{\ep}(v)||_{L^4}\leq ||P_\ep(v_-)||_{L^4}+||P_\ep(v_+)||_{L^4}$$ By construction, each of these two terms vanishes on an end of the cylinder.  Thus, since $||P_\ep(v_-)||_{L^2_1} \leq C ||v||_{L^2}$ and $||P_\ep(v_+)||_{L^2_1} \leq C ||v||_{L^2}$ the conclusion holds for the $v$ term.

For $Q_\ep(\beta)$ we need to investigate terms of 3 types. Let $\beta=\beta_-+\beta_++\beta_0$ where the decomposition corresponds to breaking up $b$ into the positive, negative and zero eigenspaces.   For $\beta_0$,  we have $Q_\ep(\beta_0)=\beta_0$
Since there are only finitely many eigenvectors of $L$ with zero eigenvalue, we can bound the $L^4$-norm of $Q_\ep(\beta_0)$ by $C||\beta_0||_{L^2_{1/2}}$.  For $\beta_+$ we note that
although  $Q_\ep(\beta_+)=-\sum_\lambda e^{-t\lam} c_\lam \phi_\lam$ does not vanish on the endpoints it extends to an $L^2_1$
function on $[0,\infty]\times S^1$.  In fact, by the calculation in the first lemma, the $L^2_1$-norm of the extension
is bounded by the $L^2_{1/2}$ norm of $\beta_+$.  The lemma above applies since this extension vanishes at $\infty$.  The argument for $\beta_-$ is
similar.
\end{proof}
\begin{lemma}
$||Q_\ep||_{(L^2_{1/2}:L^4)}$ is  uniformly bounded in $\epsilon$ and approaches 0 weakly as $\ep \rightarrow 0 $.

\end{lemma}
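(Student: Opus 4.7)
The uniform bound is immediate from Lemma~\ref{l4lem}: setting $v=0$ there gives $\|Q_\ep(\beta)\|_{L^4}\leq C\|\beta\|_{L^2_{1/2}}$ with $C$ independent of $\ep$. So the work lies in the weak convergence statement.

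The plan is to check that $\|Q_\ep(\beta)\|_{L^4}\to 0$ as $\ep\to 0$ for every fixed $\beta\in L^2_{1/2}(S^1,\C^n)$, which in particular implies weak convergence in $L^4$ once we interpret $Q_\ep(\beta)$ as an element of a fixed ambient $L^4$-space by extension by zero. First I would verify this on the dense set of finite linear combinations of eigenvectors $\phi_\lambda$ of $L=J\partial_\theta$. Using the explicit formulas for $Q_\ep$, for $\lambda>0$ we have
\[
\|Q_\ep(\phi_\lambda)\|_{L^4}^4 \;=\; \int_0^\ep\!\!\int_{S^1} e^{-4t\lambda}|\phi_\lambda|^4\,d\theta\,dt \;=\; c\cdot\frac{1-e^{-4\ep\lambda}}{\lambda},
\]
and an analogous bound holds for $\lambda<0$; for the finite-dimensional zero eigenspace one has $Q_\ep(\phi_0)=\phi_0$ (constant in $t$) whose $L^4$-norm is $O(\ep^{1/4})$. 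Each of these tends to $0$ as $\ep\to 0$, and by the triangle inequality so does $\|Q_\ep(\beta')\|_{L^4}$ for any finite linear combination $\beta'$.

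Next I would promote this to all of $L^2_{1/2}$ by a standard $3\eta$-argument: given $\eta>0$ and $\beta\in L^2_{1/2}$, pick a finite sum of eigenvectors $\beta'$ with $\|\beta-\beta'\|_{L^2_{1/2}}\leq \eta/(3C)$, where $C$ is the uniform operator bound from part (i). Then
\[
\|Q_\ep(\beta)\|_{L^4}\;\leq\;\|Q_\ep(\beta-\beta')\|_{L^4}+\|Q_\ep(\beta')\|_{L^4}\;\leq\; \tfrac{\eta}{3}+\|Q_\ep(\beta')\|_{L^4},
\]
and for $\ep$ sufficiently small the second summand is $\leq 2\eta/3$ by the explicit calculation above. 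This yields pointwise (indeed norm) convergence $Q_\ep(\beta)\to 0$ for every $\beta$, which is stronger than weak convergence and so suffices.

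The only genuine subtlety, and the step I would scrutinize most carefully, is the dependence of the codomain on $\ep$: a priori $Q_\ep(\beta)$ lives in $L^4([0,\ep]\times S^1)$, a shrinking domain. The cleanest way around this is to extend $Q_\ep(\beta)$ by zero into a fixed space $L^4([0,1]\times S^1)$, and to observe that the uniform bound and the eigenvector calculation above are preserved under this extension. Everything else is then a formal consequence of density and uniform boundedness of the family $\{Q_\ep\}$.
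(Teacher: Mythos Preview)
Your argument is correct and follows essentially the same route as the paper: both use the uniform bound from Lemma~\ref{l4lem}, verify $\|Q_\ep(\phi_\lambda)\|_{L^4}\to 0$ on individual eigenvectors, and then promote to all of $L^2_{1/2}$ via density plus the uniform operator bound. The only cosmetic difference is that the paper dispatches the eigenvector case by noting $\|Q_\ep(\phi_\lambda)\|_{C^0}\leq \|\phi_\lambda\|_{C^0}$ together with the shrinking cylinder length, rather than computing the integral explicitly as you do; your extra remark about embedding the varying codomains into a fixed $L^4$ space is a point the paper leaves implicit.
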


\begin{proof}  Choose any $\delta>0$. We have $\beta=\sum_{i=1}^k c_i\phi_{\lam_i}+\beta'$ where $||\beta'||_{L^2_{1/2}}<\delta/C$.  As $\ep \rightarrow 0$, we have $||Q_{\ep}(\phi_{\lam_i})||_{L^4}\rightarrow 0$ since the $C^0$ norm of $Q_{\ep}(\phi_{\lam_i})$ is bounded by that of $\phi_{\lam_i}$ the length of the cylinder is going to zero.  Thus $||Q_\ep(\beta)||_{L^4}\leq \sum_{i=1}^k||c_iQ_\ep(\phi_{\lam_i})||_{L^4}+\delta$ and we can choose $\epsilon$ so small that $\sum_{i=1}^k||c_i Q_\ep(\phi_{\lam_i})||_{L^4}\leq \delta$
\end{proof}
\begin{lemma}
 We may write $X_{H}(x)$ as $K(x)\cdot x$ where $K$ is a function with $K(0)=0$ and $|K(x)|\leq C|x|$.
  Furthermore,  $|K(x)\cdot x-K(y)\cdot y|\leq 2C(|x|+|y|)|x-y|$.
\end{lemma}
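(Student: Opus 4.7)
The plan is to produce $K$ by radially integrating the differential. Setting
$$K(x) = \int_0^1 DX_H(sx)\,ds,$$
the chain rule together with the fundamental theorem of calculus gives $K(x)\cdot x = X_H(x) - X_H(0) = X_H(x)$, using $X_H(0)=0$, which holds because $H\equiv 0$ near $0$. The same hypothesis forces $DX_H(0)=0$, hence $K(0)=0$.

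For the pointwise estimate $|K(x)|\leq C|x|$, the key step is a linear bound $|DX_H(y)|\leq C_0|y|$ globally. The structural assumptions on $H$ make the nonlinear perturbation $X_{H_c}$ compactly supported and identically zero on some ball $B(0,r)$; hence $|DX_{H_c}|$ is uniformly bounded by some $M$ and vanishes on $B(0,r)$, so outside that ball $|y|\geq r$ gives $|DX_{H_c}(y)|\leq M \leq (M/r)|y|$ trivially. Inserting this in the integral yields $|K(x)|\leq \int_0^1 C_0 s|x|\,ds \leq C|x|$.

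The Lipschitz-type bound I would prove via the splitting
$$K(x)\cdot x - K(y)\cdot y = K(x)(x-y) + \bigl(K(x)-K(y)\bigr)\cdot y.$$
The first term is controlled by $|K(x)|\,|x-y|\leq C|x|\,|x-y|$. For the second, writing $K(x)-K(y)=\int_0^1(DX_H(sx)-DX_H(sy))\,ds$ and appealing to the Lipschitz character of $DX_H$ (bounded by $C$ after relabeling constants) gives $|K(x)-K(y)|\leq C|x-y|$. Adding the two contributions produces $|K(x)\cdot x - K(y)\cdot y|\leq C(|x|+|y|)|x-y|$, in fact slightly stronger than the claimed bound.

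The main subtlety I anticipate is interpretive rather than technical. Read literally, $X_H$ includes the linear piece $c\cdot u$ coming from the quadratic behavior of $H$ at infinity, for which $DX_H(0)=c\neq 0$ and the lemma as stated would fail. Consistent with the surrounding discussion, the lemma should be understood as applied to the compactly supported perturbation $X_{H_c}$ (equivalently, $X_H$ after subtracting its linearization at the origin); under this reading, the argument above goes through verbatim.
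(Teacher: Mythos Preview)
Your argument is exactly the paper's: set $K(x)=\int_0^1 DX_H(sx)\,ds$, use $H\equiv 0$ near $0$ to get $X_H(0)=0$ and $DX_H(0)=0$, bound $|DX_H(y)|\le C_0|y|$ from vanishing near the origin together with global boundedness of $DX_H$, and derive the Lipschitz estimate via the same splitting $K(x)x-K(y)y=K(x)(x-y)+(K(x)-K(y))y$.

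Your closing worry, however, is misplaced and in fact contradicts your own correct step $DX_H(0)=0$. The decomposition $X_H(u)=c\cdot u+X_{H_c}(u)$ records the asymptotics of $X_H$ at \emph{infinity}, not at the origin: near $0$ the hypothesis $H\equiv 0$ forces $X_H\equiv 0$ and hence $DX_H(0)=0$, whereas it is $X_{H_c}(u)=X_H(u)-c\cdot u=-c\cdot u$ that fails to vanish there. So the lemma really is about the full $X_H$ as written---the paper's proof opens with ``Let $G(x)=X_H(x)$''---and no reinterpretation is needed. Correspondingly, in your middle paragraph it is $X_H$, not $X_{H_c}$, that vanishes identically on $B(0,r)$; once you relabel, your bound $|DX_H(y)|\le C_0|y|$ goes through exactly as you wrote.
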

\begin{proof}
Let $G(x)=X_{H}(x)$.  Since $H(x)=0$ when $x$ is near 0, we have $$G(x)=\int_0^1DG(tx)dt\cdot x$$
Let $K(x)=\int_0^1DG(tx)dt$.  Since $DG(0)=0$ and $|DG(tx)|\leq C|t||x|$, we have
 $K(0)=0$ and $|K(x)|\leq C|x|$ as desired.  Note that $$|K(x)-K(y)|\leq C|x-y|$$ since $$|DG(tx)-DG(ty)|\leq C|tx-ty|$$ We estimate: $$|K(x)\cdot x-K(y)\cdot y|\leq |K(x)\cdot x-K(x)\cdot y| +
 |K(x)\cdot y-K(y)\cdot y|\leq 2C (|x|+|y|)|x-y|$$
\end{proof}
\begin{lemma}
Given functions $\alpha$, $\beta$ on the cylinder $S^1\times [0,\ep]$, we have $$||X_{H}(\alpha)-X_{H}(\beta)||_{L^2}\leq 2C(||\alpha||_{L^4}+||\beta||_{L^4})||\alpha-\beta||_{L^4}$$ where $C$ is independent of $\ep$.
\end{lemma}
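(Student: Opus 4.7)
The plan is to derive this from the previous lemma by a simple pointwise-to-integral argument. The previous lemma gives us the crucial pointwise bound
\[
|X_H(x) - X_H(y)| = |K(x)\cdot x - K(y)\cdot y| \leq 2C(|x|+|y|)|x-y|
\]
which is uniform in the spatial point. Applying this with $x=\alpha(p)$ and $y=\beta(p)$ at each $p \in S^1\times[0,\ep]$, squaring, and integrating, the problem reduces to estimating
\[
\int (|\alpha|+|\beta|)^2|\alpha-\beta|^2
\]
in terms of $L^4$ norms, with no explicit $\ep$-dependence appearing anywhere.

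The key step is to split this integral via the Cauchy--Schwarz inequality applied to the product of $(|\alpha|+|\beta|)^2$ and $|\alpha-\beta|^2$:
\[
\int (|\alpha|+|\beta|)^2|\alpha-\beta|^2 \leq \left(\int(|\alpha|+|\beta|)^4\right)^{1/2}\left(\int|\alpha-\beta|^4\right)^{1/2}.
\]
The second factor is immediately $\|\alpha-\beta\|_{L^4}^2$. For the first factor, I would apply Minkowski's inequality in $L^4$ to conclude
\[
\left(\int(|\alpha|+|\beta|)^4\right)^{1/4} \leq \|\alpha\|_{L^4}+\|\beta\|_{L^4}.
\]
Combining these and taking square roots yields the asserted inequality, with the constant $2C$ inherited directly from the previous lemma.

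There is no real obstacle here; the only thing to verify is that the constant $C$ indeed depends only on $X_H$ (equivalently on $H$) and not on $\ep$, which is immediate since the pointwise bound is geometric and the Hölder/Minkowski inequalities are dimension-- and domain--independent. The $\ep$-independence is essential for the applications in the subsequent contraction mapping argument, but it comes for free from this proof structure.
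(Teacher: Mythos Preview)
Your proof is correct and follows essentially the same route as the paper: apply the pointwise estimate from the previous lemma, square and integrate, then use H\"older/Cauchy--Schwarz to pass to $L^4$ norms. The only cosmetic difference is that the paper first replaces $(|\alpha|+|\beta|)^2$ by $|\alpha|^2+|\beta|^2$ (up to a constant) and applies Cauchy--Schwarz termwise, whereas you keep $(|\alpha|+|\beta|)^2$ intact and invoke Minkowski afterward; both arrive at the same bound with the same $\ep$-independence.
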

\begin{proof}
Integrating the inequality of the previous lemma we get:
 $$\int |X_{H}(\alpha)-X_{H}(\beta)|^2d\tet dt\leq$$ $$C^2\int(|\alpha|^2+|\beta|^2)|\alpha-\beta|^2d\tet dt\leq 4C^2(||\alpha||_{L^4}^{2}+||\beta||_{L^4}^2)||\alpha-\beta||_{L^4}^{2}$$
\end{proof}

Borrowing notation from the section on the shrinking cylinder, we deduce that $$v \mapsto X_{H}(Q_\ep(\beta)+P_\ep(v))$$ is a contraction mapping for small enough $\ep$ and a fixed $\beta$.   Let us denote this map by $F^\ep(\beta,v)$. 
Slightly more generally, we may consider $$F^\ep(\beta,v)=g_\ep-X_{H}(Q_\ep(\beta)+P_\ep(v))$$ where $g_\ep \in L^{2}([0,\ep]\times S^1, \C^n)$.  $F^\ep(\beta,)$ is contraction mapping with the same constants.  To ensure $F^\ep(\beta,)$ maps a ball of radius $1/8C$ to itself we must suppose that $g$ is sufficiently small.\\\\
Consider the map $G^\ep(\beta,v)=(\beta,v-F^\ep(\beta,v))$.  For $\ep$ small, the previous lemma allows us to conclude that the existence of an inverse $H^\ep(\beta,v)$ with $G^\ep(\beta,H^\ep(\beta,v))=(\beta,v)$.  Furthermore, $H^\ep(\beta,0)\rightarrow 0$ as $\ep \rightarrow 0$.  We would like to conclude the same for the derivative:
\begin{lemma}
Let $D_1H_{(\beta,0)}^\ep$ be the derivative with respect to the $\beta$ variable at the point $(\beta,0)$.  We have $|D_1H^\ep_{(\beta,0)}|\rightarrow 0$ as $\ep\rightarrow 0$.
\end{lemma}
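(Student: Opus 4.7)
The strategy is to differentiate the defining relation $H^\ep(\beta,v) - F^\ep(\beta, H^\ep(\beta,v)) = v$ with respect to $\beta$, obtaining
$$D_1 H^\ep_{(\beta,v)} \;=\; (I - D_2 F^\ep)^{-1}\,D_1 F^\ep_{(\beta, H^\ep(\beta,v))}.$$
Because $F^\ep(\beta,\cdot)$ is a contraction with constant $k<1$ independent of $\ep$, we have $\|D_2 F^\ep\|\le k$, so $(I-D_2F^\ep)^{-1}$ exists as a Neumann series with operator norm at most $1/(1-k)$, uniformly in $\ep$. The problem thus reduces to showing that $\|D_1 F^\ep_{(\beta, H^\ep(\beta,0))}\|_{L^2_{1/2}\to L^2}\to 0$ as $\ep\to 0$.

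\textbf{Linearizing $F^\ep$ via the quadratic Lipschitz bound.} Write $F^\ep(\beta,v)=g_\ep - X_H(Q_\ep(\beta)+P_\ep(v))$. Applying the difference quotient in $\beta$ together with the previously established estimate $\|X_H(\alpha)-X_H(\alpha')\|_{L^2}\le 2C(\|\alpha\|_{L^4}+\|\alpha'\|_{L^4})\|\alpha-\alpha'\|_{L^4}$, and using that the only piece of the argument of $X_H$ depending on $\beta$ is $Q_\ep(\beta)$, one obtains
$$\|D_1 F^\ep_{(\beta,v)}\|_{L^2_{1/2}\to L^2} \;\le\; 4C\,\|Q_\ep(\beta)+P_\ep(v)\|_{L^4}\,\|Q_\ep\|_{L^2_{1/2}\to L^4}.$$
The norm $\|Q_\ep\|_{L^2_{1/2}\to L^4}$ is uniformly bounded by the earlier lemma on $Q_\ep$, so it suffices to show that the scalar $\|Q_\ep(\beta)+P_\ep(H^\ep(\beta,0))\|_{L^4}$ tends to zero. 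The $Q_\ep(\beta)$ term goes to zero by the same lemma, which gives pointwise-in-$\beta$ $L^4$-convergence $Q_\ep(\beta)\to 0$. For $P_\ep(H^\ep(\beta,0))$, I would split $H^\ep(\beta,0)$ into positive and non-positive spectral parts of $L$ so that each summand in the $P_\ep$-image vanishes on one end of the cylinder, and then the embedding $L^2_1\hookrightarrow L^4$ on the shrinking cylinder (with constant independent of $\ep\le 1$) combined with $\|P_\ep\|_{L^2\to L^2_1}\le C$ gives $\|P_\ep(H^\ep(\beta,0))\|_{L^4}\le C'\|H^\ep(\beta,0)\|_{L^2}$. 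This vanishes by the already-established convergence $H^\ep(\beta,0)\to 0$.

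\textbf{Where the difficulty lies.} The genuine analytic content has already been packaged in the preceding APS-inverse, Sobolev-embedding, and weak-vanishing-of-$Q_\ep$ lemmas; what remains for the present statement is bookkeeping under which every constant and operator norm is uniform in $\ep$. The only non-automatic step is passing from the quadratic Lipschitz estimate for $X_H$ to a clean bound on the Fr\'echet derivative $D_1 F^\ep$, which I would handle by directly estimating the difference quotient $F^\ep(\beta+\delta\beta,v)-F^\ep(\beta,v)$ and taking $\delta\beta\to 0$, at which point the sum of $L^4$-norms in the Lipschitz bound collapses to twice the base-point value $\|Q_\ep(\beta)+P_\ep(v)\|_{L^4}$.
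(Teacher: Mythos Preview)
Your argument is correct and follows essentially the same route as the paper: differentiate the fixed-point identity $H^\ep(\beta,0)=F^\ep(\beta,H^\ep(\beta,0))$ in $\beta$ and reduce to showing $D_1F^\ep\to 0$, which follows from $Q_\ep(\beta)\to 0$ and $P_\ep(H^\ep(\beta,0))\to 0$ in $L^4$. The only cosmetic difference is that you control $(I-D_2F^\ep)^{-1}$ via the uniform contraction constant, whereas the paper writes out $D_1F^\ep$ and $D_2F^\ep$ explicitly through the decomposition $X_H(x)=K(x)\cdot x$ and observes that both vanish at the relevant point.
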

\begin{proof}
 Pick $b\in T_{\beta}L^2_{1/2}([0,\ep]\times S^1,\C^n)$. $F^\ep(\beta,H^\ep(\beta,0))=H^\ep(\beta,0)$ implies $$D_1F^\ep_{(\beta,H^\ep(\beta,0))}( b)+D_2 F^\ep_{(\beta,H^\ep(\beta,0))}(D_1H^\ep_{(\beta,0)}(b))=D_1H^\ep_{(\beta,0)}(b)$$  The desired result will follow if we can estimate the LHS.  From the definition, $$D_1F^\ep_{\beta,v}( b)=\nabla K(Q_\ep(\beta))Q_\ep(b)\cdot(P_\ep(v)+Q_\ep(\beta))+K(P_\ep(v)+Q_\ep(\beta))\cdot(Q_\ep(b))$$ and $$D_2F^\ep_{\beta,v}(w)=\nabla K(P_\ep(v))(P_\ep(w))\cdot(P_\ep(v)+Q_\ep(\beta))+K(P_\ep(v)+Q_\ep(\beta))\cdot(P_\ep(w))$$
Plugging in $v=H^\ep(\beta,0)$ and $w=D_1H^\ep(\beta,0)$ and using that $Q_\ep(\beta)$ and $P_\ep(H^\ep(\beta,0))\rightarrow 0$ as $\ep\rightarrow 0$ the conclusion follows.
\end{proof}

\subsection{Adding the Collar}

\begin{lemma}\label{collem}
Given $b\in \Be$,  decompose $b$ as $b^+_0+b^-_\ep$.  There exists  sufficiently small $\ep>0$ such that there is  a unique small energy holomorphic curve $\gamma$ with $b^+_0+b^-_\ep$ as the mixed boundary value.
\end{lemma}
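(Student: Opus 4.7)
The plan is to turn the boundary value problem into a fixed-point problem on $L^2$ via the APS Green's operators $P_\ep, Q_\ep$ built in the preceding subsection, and then invoke the contraction principle that has already been verified. Given $b \in \Be$, let $\beta$ be the mixed APS boundary datum assembled from $b$, namely $b^+$ placed at $\{0\} \times S^1$ and $b^-$ placed at $\{\ep\} \times S^1$, matching the spectral splitting of $L = J\partial_\theta$. A holomorphic curve $\gamma:[0,\ep] \times S^1 \to \C^n$ with this boundary datum is sought as a solution of equation~(\ref{JCurv}).

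Writing this equation in the form $D\gamma + X_H(\gamma) = 0$ (in the conventions of the preceding lemmas) and substituting the APS ansatz $\gamma = Q_\ep(\beta) + P_\ep(v)$---which automatically enforces the boundary condition and gives $D\gamma = v$---reduces the problem to the fixed-point equation
\[
v = -X_H\bigl(Q_\ep(\beta) + P_\ep(v)\bigr) = F^\ep(\beta, v),
\]
which is precisely the map studied just above. The preceding chain of lemmas already supplies the necessary estimates: the Lipschitz bound on $X_H$ in terms of $L^4$ norms, the uniform $L^2 \to L^2_1$ control on $P_\ep$ combined with the short-cylinder embedding $L^2_1 \hookrightarrow L^4$, and, crucially, the fact that $\|Q_\ep(\beta)\|_{L^4} \to 0$ as $\ep \to 0$ for each fixed $\beta$. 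Together these force $F^\ep(\beta,\cdot)$ to be a contraction on a small $L^2$-ball and to map that ball into itself, once $\ep$ is small (depending on $b$). The Banach fixed-point theorem then produces a unique $v$ in this ball, and $\gamma := Q_\ep(\beta) + P_\ep(v)$ is the sought-after holomorphic curve.

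Uniqueness among small-energy curves is a matter of pushing any candidate $\tilde\gamma$ into the contraction ball. Write $\tilde\gamma = Q_\ep(\beta) + P_\ep(\tilde v)$ with $\tilde v = D\tilde\gamma = -X_H(\tilde\gamma)$; the small-energy hypothesis together with the energy identity $\CSD_H(\tilde\gamma(\ep,\cdot)) - \CSD_H(\tilde\gamma(0,\cdot)) = E(\tilde\gamma)$ and the bounds implicit in Theorem~\ref{compact} force $\|\tilde v\|_{L^2}$ below the contraction radius, so the two solutions coincide.

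The main technical obstacle is a quantitative one: the operator norm of $Q_\ep:L^2_{1/2} \to L^4$ does not tend to zero uniformly in $\beta$, only pointwise, so the threshold $\ep$ must depend on $b$. Since the lemma is stated with $\ep$ depending on $b$, this is harmless here; it would however become a genuine obstacle if one later needed the same $\ep$ to work across a family of boundary values, in which case an equicontinuity argument on precompact subsets of $\Be$ would be required.
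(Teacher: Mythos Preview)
Your approach is essentially the same as the paper's: both write the candidate curve in the APS ansatz $\gamma = Q_\ep(\beta) + P_\ep(v)$, reduce the perturbed holomorphic curve equation to the fixed-point problem $v = F^\ep(\beta,v)$ on $L^2$, and invoke the contraction principle already established in the preceding lemmas. The paper's proof is terser and carries an auxiliary inhomogeneous term $g_\ep$ (vanishing as $\ep\to 0$) that you correctly take to be zero; your added paragraph on uniqueness and your remark that the threshold $\ep$ depends on $b$ through the merely weak decay of $\|Q_\ep(\beta)\|_{L^4}$ are useful clarifications not spelled out in the paper, but the substance of the argument is identical.
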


 \begin{proof}  This follows immediately from the arguments of the previous subsections.  Indeed, let $P_\ep=P_\ep^0\oplus P_\ep^+$ as above.  If $v$ is the unique small fixed point of the map $$v\mapsto g-X_H(Q_\ep(b)+P_\ep(v))$$ then $$(\partial_t+J\partial_\theta)(P_\ep( v)+Q_\ep(b))+X_H(Q_\ep(b)+P_\ep(v))=g$$  Note that since $g$ is some fixed section with $L^2_1$ norm and $L^4$ norm going to zero as $\ep \rightarrow 0$ the contraction lemma applies.
 \end{proof}

 We complete $\cup_{t\in (0,1]}
\Mod_t$  to form a lc-manifold $\cup_{t\in [0,1]}
\Mod_t$ as follows.  The 0th stratum is  $\cup_{t\in (0,1)}\Mod_t$ while the 1st stratum is  $\Mod_1 \coprod \Delta$.  A sequence $z_i \in \cup_{t\in (0,1)}\Mod_t$ is said to converge to $z \in \Delta$ if $\Rest(z_i)$ converges in $L^2_{1/2}$ to $z$.  Lemma $\ref{collem}$ implies that the completed space has the structure of an lc-manifold.  In fact, the completed manifold is smooth, but the extension of the restriction map to the diagonal is not smooth.  We will see how to deal with this in a later section.

\subsection{Verifying the Axioms}

In this section we verify that the completed correspondence satisfies the first 3 axioms of a correspondence. This ensures that for a chain $\sigma$, we will have that $\sigma \times_{\Be_0}\cup_{t\in [0,1]}
\Mod_t$ satisfies the first 3 axioms of a chain.  In other words, those axioms that deal with the convergence properties.  The existence of a lc-structure on $\sigma \times_{\Be_0}\cup_{t\in [0,1]}
\Mod_t$ will be handled in a separate section.  Note, that it is possible to modify the definition of a correspondence so that $\cup_{t\in [0,1]}
\Mod_t$ is a genuine correspondence and thus $\sigma \times_{\Be_0}\cup_{t\in [0,1]}
\Mod_t$ automatically has such an lc-structure.  We choose to avoid this more general definition since it seems to obscure matters and will not be used in the future.  \\\\
We will first deduce a uniform $L^2_1$ bound on configurations.  Suppose  for this section that $\Energy(\gamma_i)$ is bounded and $\Rest_0(\gamma_i)$ weakly converges (this hypothesis is satisfied in all of the axioms we need to check).  We may also assume that we have a sequence of solutions $\gamma_i$ on cylinders of shrinking length as all the other cases have been covered.  Energy bounds give us uniform bounds on $L^2_1$-norm of $\gamma_i$ by theorem $\ref{compact}$

\begin{lemma}
We have a uniform bound on $||\Rest_\ep(\gamma_i)||_{L^2_{1/2}}$. Assume, $\Rest_0(\gamma_i)$ is $L^2_{1/2}$ convergent and $\Energy(\gamma_i)\rightarrow 0$.  We have $\Rest_{\ep}(\gamma_i)$ $L^2_{1/2}$-convergent and $\gamma_i$ $L^2_1$-convergent.
\end{lemma}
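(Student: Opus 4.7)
The plan is to derive the bound from the gradient-flow structure of $\gamma_i$ rather than from a naive Sobolev trace: the trace inequality $L^2_1([0,\ep]\times S^1)\to L^2_{1/2}(S^1)$ has a constant that blows up as $\ep\to 0$ (test it on $t$-independent profiles), so the uniform $L^2_1$ bound from Theorem~\ref{compact} is not by itself enough. Combined with the structure $X_H(u)=cu+X_{H_c}(u)$ of the preceding lemmas (in which $-Jc$ is a real scalar and $X_{H_c}$ is bounded, compactly supported, and $C^1$), the $L^2_1$ bound on $\gamma_i$ gives $\|\nabla H(\gamma_i)\|_{L^2}\leq C$ uniformly on the cylinder and, slicewise, $\|\nabla H(u)\|_{L^2_{1/2}(S^1)}\leq C\|u\|_{L^2_{1/2}(S^1)}+C'$ (linear for the $cu$ piece, and Lipschitz via the Gagliardo seminorm for the compactly supported piece).

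For the uniform $L^2_{1/2}$ bound on $\Rest_{\ep_i}(\gamma_i)$, I would split the restriction along the polarization. On the $\Pi^+_L$ image (the nonnegative part of $\mathrm{spec}(L)$, corresponding to $T^-\Be$) the semigroup $e^{-tL}$ is a contraction in $L^2_{1/2}$, so Duhamel gives
\[
\|\Pi^+_L\Rest_{\ep_i}(\gamma_i)\|_{L^2_{1/2}}\leq \|\Pi^+_L\Rest_0(\gamma_i)\|_{L^2_{1/2}}+\int_0^{\ep_i}\|\nabla H(\gamma_i(s,\cdot))\|_{L^2_{1/2}}\,ds,
\]
and a Gronwall/continuation argument using the slicewise estimate above closes the bound on this component. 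The $\Pi^-_L$-component (where forward propagation is expanding) is then controlled through the energy identity $E(\gamma_i)=\CSD_H(\Rest_{\ep_i}(\gamma_i))-\CSD_H(\Rest_0(\gamma_i))$ together with $\CSD_H(\gamma)=\tfrac12(\|\gamma^+\|^2_{L^2_{1/2}}-\|\gamma^-\|^2_{L^2_{1/2}})-\int H(\gamma)$ and $|\int H(\gamma)|\leq C\|\gamma\|^2_{L^2}+C'$: since $E(\gamma_i)$, $\CSD_H(\Rest_0(\gamma_i))$, and $\|\gamma^-\|_{L^2_{1/2}}=\|\Pi^+_L\Rest_{\ep_i}(\gamma_i)\|_{L^2_{1/2}}$ are all bounded, solving for $\|\gamma^+\|^2_{L^2_{1/2}}=\|\Pi^-_L\Rest_{\ep_i}(\gamma_i)\|^2_{L^2_{1/2}}$ gives the needed bound.

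Under the stronger hypotheses $\Rest_0(\gamma_i)\to\gamma_\infty$ in $L^2_{1/2}$ and $E(\gamma_i)\to 0$, the same forward Duhamel yields $\Pi^+_L\Rest_{\ep_i}(\gamma_i)\to\Pi^+_L\gamma_\infty$ in $L^2_{1/2}$, since $e^{-\ep_iL}\Pi^+_L\Rest_0(\gamma_i)\to\Pi^+_L\gamma_\infty$ by strong continuity of the semigroup and the integral correction is $O(\ep_i)$. For the $\Pi^-_L$ piece I would invoke Lemma~\ref{collem} and the estimate $\|D_1H^\ep_{(\beta,0)}\|\to 0$ from the previous subsection: together they show that the map sending the $\Rest_0$-data of a small-energy holomorphic cylinder to its $\Rest_\ep$-data extends continuously to the identity at $\ep=0$, so $\Pi^-_L\Rest_{\ep_i}(\gamma_i)\to\Pi^-_L\gamma_\infty$. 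The $L^2_1$-convergence of $\gamma_i$ to the $t$-constant extension of $\gamma_\infty$ then follows at once: $E(\gamma_i)\to 0$ controls $\partial_t\gamma_i$ and $\partial_\theta\gamma_i-X_H(\gamma_i)$ in $L^2$, and the trace convergence together with the Duhamel formula give $L^2$-convergence of $\gamma_i$ itself. The delicate step is the $\Pi^-_L$ bound on the restriction, where forward Duhamel is unstable and no uniform trace theorem is available; the argument replaces these with the $\CSD_H$ energy identity, or alternatively with the contraction-mapping inversion of the APS problem developed in the previous subsection.
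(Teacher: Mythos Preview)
Your argument is substantially more explicit than the paper's and takes a genuinely different route for the uniform bound. The paper's proof is four sentences: it does not derive the $L^2_{1/2}$ bound on $\Rest_\ep(\gamma_i)$ explicitly (this is left to the surrounding discussion and the APS boundary estimates of the preceding subsection), and for the convergence it simply observes that once $\Energy(\gamma_i)\to 0$ the solutions lie in the collar, where they are uniquely parameterized by the mixed data $\beta_i=(\Pi^+_L\Rest_0(\gamma_i),\Pi^-_L\Rest_\ep(\gamma_i))$, so convergence of the endpoints gives convergence of $\gamma_i$. The implicit mechanism is the APS estimate $\|Q_\ep(\beta)|_\partial-\beta\|^2_{L^2_{1/2}}\le 2\int|\partial_tQ_\ep(\beta)|^2$: since $\int|\partial_t\gamma_i|^2\le \Energy(\gamma_i)\to 0$ and the $P_\ep(v_i)$ correction has $\|v_i\|_{L^2}\to 0$ as the cylinder shrinks, both $\Rest_0(\gamma_i)$ and $\Rest_\ep(\gamma_i)$ are forced $L^2_{1/2}$-close to $\beta_i$; convergence of $\Rest_0(\gamma_i)$ therefore gives convergence of $\beta_i$, hence of $\Rest_\ep(\gamma_i)$.

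Your forward-Duhamel bound on the $\Pi^+_L$ component and your action-identity bound on the $\Pi^-_L$ component are both correct and are self-contained alternatives to the APS route. The one genuine imprecision is your convergence argument for $\Pi^-_L\Rest_{\ep_i}(\gamma_i)$: the assertion that ``the map sending the $\Rest_0$-data to its $\Rest_\ep$-data extends continuously to the identity at $\ep=0$'' is not right as stated, because on $L^2_{1/2}$ such a map would require inverting the contracting semigroup $e^{\ep L}|_{\Pi^-_L}$, which is unbounded. The easy fix is to rerun your own action-identity step: you already know $\CSD_H(\Rest_{\ep_i}(\gamma_i))=\Energy(\gamma_i)+\CSD_H(\Rest_0(\gamma_i))\to\CSD_H(\gamma_\infty)$, the $\Pi^+_L$ norm converges by Duhamel, and $\int H$ is continuous for the $L^2$ topology (which you have from $\|\Rest_0-\Rest_\ep\|_{L^2}\le\ep^{1/2}\|\partial_t\gamma\|_{L^2}\to 0$), so the $\Pi^-_L$ norm must also converge; weak convergence plus norm convergence then gives strong convergence.
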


\begin{proof}
 For strong compactness, we assume $\Energy(\gamma_i)\rightarrow 0$ and $\Rest_0(\gamma_i)$ converges.  By the  arguments on weak compactness, convergent of $\Rest_0(\gamma_i)$ implies the same of $\Rest_{\ep}(\gamma_i)$. Note, since the energy is approaching zero, eventually the sequence is in the domain of the contraction mapping theorem and thus lies in the collar.  Therefore, the endpoints uniquely parameterize the solutions $\gamma_i$ and strong convergence follows from that of the endpoints.
\end{proof}

Observe that our discussion establishes  the following:
\begin{corollary}
 If $\Energy(\gamma_i)<C$ and $\Rest_0(\gamma_i)$ is uniformly bounded, we have $||\gamma_i||_{L^2_1}$ is uniformly bounded
\end{corollary}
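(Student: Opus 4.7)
The plan is to split the sequence according to the length $t_i \in [0,1]$ of the cylinder on which $\gamma_i$ is defined, and handle the two regimes separately: cylinder lengths bounded away from $0$, and cylinder lengths tending to $0$. After passing to subsequences, only these two cases need to be treated, and a uniform $L^2_1$ bound on each establishes the corollary by taking the maximum.

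First I would dispose of the case $t_i \geq \delta > 0$ along a subsequence. Here Theorem \ref{compact} applies on a fixed-length cylinder (after rescaling into $[0,\delta]\times S^1$ or by simply invoking the theorem for a cylinder of length $t_i$, since the constants there depend continuously on $T$). The hypothesis $\|\Rest_0(\gamma_i)\|_{L^2_{1/2}} \leq C$ implies the required $L^2$ bound on the boundary, which is precisely the extra hypothesis needed in the critical case $c \in \sqrt{-1}\mathbb{Z}$. Combined with the energy bound, Theorem \ref{compact} yields a uniform $L^2_1$ bound along this subsequence.

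The more delicate case is $t_i \to 0$. Here I would invoke the contraction mapping construction from the collar subsection: for $\ep$ sufficiently small, a solution $\gamma$ on $[0,\ep]\times S^1$ of small energy is uniquely recovered from its mixed boundary data $b = b^+_0 + b^-_\ep$ via
\[
\gamma = Q_\ep(b) + P_\ep(v),
\]
where $v$ is the unique small fixed point of $v \mapsto g_\ep - X_H(Q_\ep(b) + P_\ep(v))$. The preceding lemma already supplies uniform $L^2_{1/2}$ bounds on both $\Rest_0(\gamma_i)$ and $\Rest_{\ep_i}(\gamma_i)$, hence on $b_i$. Since the operator norms $\|P_{\ep_i}\|$ and $\|Q_{\ep_i}\|$ are bounded independently of $\ep_i$, and since $v_i$ lies in a fixed small ball by the contraction argument, we obtain a uniform $L^2_1$ bound on $\gamma_i = Q_{\ep_i}(b_i) + P_{\ep_i}(v_i)$.

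The main obstacle is the transition region, i.e. verifying that the contraction mapping hypotheses are genuinely in force for all $\gamma_i$ with $t_i$ small enough. This requires knowing that the energy of $\gamma_i$ (and hence the forcing term $g_{\ep_i}$ together with the diameter of the fixed-point ball) is compatible with the radius $1/8C$ in which the map is a contraction. For $t_i \to 0$ this follows because bounded total energy on a shrinking cylinder gives small local energy; one then concludes that eventually $\gamma_i$ lies in the collar and the endpoint parametrization applies. Any finitely many $\gamma_i$ whose cylinder lengths are intermediate can be absorbed into the first case by enlarging $\delta$, so no additional work is needed there.
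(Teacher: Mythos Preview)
Your case split is unnecessary, and the shrinking-cylinder branch has a genuine gap. The paper's argument is a one-line application of Theorem~\ref{compact}: if you inspect that proof, the constants relating the energy to the $L^2_1$ norm are uniform in the cylinder length $T\in(0,1]$. In the noncritical case this is because the slice inequality $\int_{S^1}|u_\theta-cu|^2\,d\theta\geq \delta\int_{S^1}(|u_\theta|^2+|u|^2)\,d\theta$ depends only on $c$; in the critical case the propagation estimate $\bigl|\,\|u(0,\cdot)\|_{L^2}-\|u(\tau,\cdot)\|_{L^2}\,\bigr|\leq \tau^{1/2}\|u_t\|_{L^2}$ only improves as $\tau$ shrinks, and the $L^\infty$ bound on $X_{H_c}$ contributes at most $2\pi T\cdot\mathrm{const}$. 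So the hypothesis on $\Rest_0(\gamma_i)$ supplies the boundary $L^2$ bound and Theorem~\ref{compact} gives the $L^2_1$ bound directly, with no need to separate out small $t_i$. This is exactly what the sentence ``Energy bounds give us uniform bounds on $L^2_1$-norm of $\gamma_i$ by theorem~\ref{compact}'' preceding the lemma asserts; the corollary merely records it.

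Your collar route fails at two points. First, you invoke the preceding lemma for a uniform $L^2_{1/2}$ bound on $\Rest_{\ep_i}(\gamma_i)$, but that assertion in the lemma is itself a consequence of the $L^2_1$ bound you are trying to prove, so the argument is circular; without it you have no control on $b_i^-=\Pi^-\Rest_{\ep_i}(\gamma_i)$. Second, the sentence ``bounded total energy on a shrinking cylinder gives small local energy'' is not correct: the energy already \emph{is} the integral over $[0,t_i]\times S^1$, and a uniform bound $E(\gamma_i)<C$ does not force $E(\gamma_i)\to 0$ as $t_i\to 0$. Lemma~\ref{collem} only parametrizes the unique \emph{small}-energy solution with given mixed boundary data, so absent an a priori smallness you cannot place $\gamma_i$ in the collar chart, and the representation $\gamma_i=Q_{\ep_i}(b_i)+P_{\ep_i}(v_i)$ with $v_i$ in the contraction ball is unavailable. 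Both problems evaporate once you note that Theorem~\ref{compact} applies uniformly in $T$.
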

We are in good shape to verify the axioms: \\
Axiom $1'$:  $\CSD_0-\CSD_1$ is bounded below by 0 since energy $\CSD$ is nonincreasing on trajectories.  To establish lower semi-continuity, we claim that in fact any sequence $(\Rest_0(\gamma_i),\Rest_\ep(\gamma_i))$ as above  weakly converges (up to a subsequence) to a diagonal element. We can assume  $(\Rest_0(\gamma_i),\Rest_\ep(\gamma_i))$ is strongly $L^2$ convergent.  We have: $$||\Rest_0(\gamma_i)-\Rest_\ep(\gamma_i)||_{L^2}\leq \int_0^\ep||d\gamma_i/dt||_{L^2}\leq \ep^{1/2}||\gamma_i||_{L^2_1}\leq C\ep^{1/2}$$ since $||\gamma_i||_{L^2_1}$ is uniformly bounded.  This implies the claim and thus the lower semi-continuity of $\CSD_1-\CSD_0$.  Axiom $2'$ and $3'$ have been verified in the previous lemma.

\subsection{Concluding the Proof}
Given a cycle $\sigma:P\rightarrow \Be$ we verify that $P\times_{\Be}\cup_{t\in [0,1]}\Mod_t$
is a cobordism between $P$ and  $P\times_{\Be}\Mod_1$.  Given that $P$ is transverse to $\cup_{t\in (0,1]}
\Mod_t$, the axioms of a cycle for $P \times_{\Be} \cup_{t\in [0,1]}
\Mod_t$ were verified in the preceding sections.  What needs to be checked is that it has the structure of a lc-manifold.  The potential problem occurs near the diagonal where the
 restriction maps converge in $C^0$ to the inclusion map. More precisely,
 in the collar coordinates $(b^+,b^-,t)$, $\Rest_0(b^+,b^-,t)=b^++e^{tL_1}b^-+G_0(t,b)$
 where $G_0(t,b)$ as a function of $b$ converges to 0 in $C^1$ topology as $t\rightarrow 0$.
 Note that $e^{tL_1}_{|B^-}$ is a family of compact operators converging to the identity in the weak topology.  Thus, we don't have $C^1$ convergence for the restriction map.  \\

We will work locally, so assume that $P$ is a ball around
the origin in Hilbert space.  Let $b_0=\sigma(0)$. Near $b_0$, $\Be$ is an affine space modeled on $T^-\Be\oplus T^+\Be$.
By assumption, $\pi^-\circ D\sigma$ is Fredholm.  Applying the inverse function theorem to $\pi^-\circ \sigma $,
we can find coordinates for $\sigma$ so that $\sigma(p)=b_0+f(p)+A(p)$ where $Df_p$ is compact at all $p$
and $A$ is an linear Fredholm map $$A:TP\rightarrow T^-\Be$$

 In these  coordinates, the map $\Rest_0-\sigma: P\times  \cup_{t\in [0,1]}\Mod_t\rightarrow \Be$ can written  as $$(p,b^+,b^-,t)\rightarrow (e^{tL}-1)b^-_0+b^+-A(p)+e^{tL}b^--f(p)+G_0(t,b_0+b)$$
 Pick a left inverse $A^{-1}$ for $A$.  Thus, $A\circ A^{-1}-I$ has finite rank on $T^-\Be$.  Define a change of coordinates by $$(b^+,b^-,p,t)\mapsto (b^+,b^-,\tilde{p},t)$$ with $\tilde{p}=p-A^{-1}\circ e^{tL_1}b$.  This is a homeomorphism with inverse taking $\tilde{p}$ to $p=\tilde{p}+A^{-1}e^{tL}b^-$. Note that for each fixed $t\geq 0$, the map is a diffeomorphism.  With the new coordinates the map $\Rest_0-\sigma$ becomes $$(\tilde{p},b^+,b^-,t)\rightarrow (e^{tL}-1)b^-_0+b^+-A(\tilde{p}+A^{-1}e^{tL}b^-)+e^{tL}b^--f(\tilde{p}+A^{-1}e^{tL}b^-)+G_0(t,b_0+b)$$ This can be simplified to $$(\tilde{p},b^+,b^-,t)\rightarrow (e^{tL}-1)b^-_0+b^+-A(\tilde{p})+K\circ e^{tL}b^--f(\tilde{p}+A^{-1}e^{tL}b^-)+G_0(t,b_0+b)$$
 where $K=I-A\circ A^{-1}$ is a finite rank operator.
   Since the change of coordinates is a homeomorphism, the continuity of the map up to the boundary still holds.  We claim that the differential in the $\tilde{p}$ and $b$ variables converge as $t\rightarrow 0$ to the ones for $t=0$.
   Computing the differential at $(\tilde{p},b,t)$ we have:
   $$(\delta \tilde{p},\delta b^+,\delta b^-)\mapsto \delta b^+-A(\delta \tilde{p})+K\circ e^{tL}\delta b^--Df_{p}(\delta \tilde{p}+A^{-1}e^{tL}\delta b^-)+{DG_0}_{(t,b_0+b)}(\delta b^++\delta b^-)$$
   We want this differential to converge to $$(\delta \tilde{p},\delta b^+,\delta b^-)\mapsto \delta b^++K\delta b^-+Df_p(\delta \tilde{p}+A^{-1}\delta b^-)$$
   In view of the compactness of $Df_p$ and $K$ as well as the fact that $e^{tL}$ is self-adjoint, the claim is a consequence of the following lemma proved in the appendix:
\begin{lemma}\label{weakconv2}
   Given a uniformly bounded weakly converging sequence of operators $A_i:V\rightarrow W$ between Hilbert spaces and  a strongly convergent sequence of compact operators $K_i:W\rightarrow U$, $K_i\circ A_i$ converge strongly to $K_\infty\circ A_\infty$ provided $A_i^*$ converge weakly to $(A_\infty)^*$.
  \end{lemma}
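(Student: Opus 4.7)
My plan is to split
\[
K_i A_i v - K_\infty A_\infty v = (K_i - K_\infty) A_\infty v + K_i (A_i - A_\infty) v
\]
and handle each piece separately. The first piece tends to zero in norm for each fixed $v$ since $A_\infty v$ is a single vector in $W$ and $K_i \to K_\infty$ strongly. The real work is therefore to show $K_i z_i \to 0$ in norm in $U$, where $z_i = (A_i - A_\infty) v$ is a bounded sequence (by uniform boundedness of $\{A_i\}$) that converges weakly to $0$ in $W$ (by the weak convergence hypothesis on $A_i$).

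I would first establish weak convergence $\langle K_i z_i, u\rangle \to 0$ for every $u\in U$. Writing $\langle K_i z_i, u\rangle = \langle z_i, K_i^* u\rangle = \langle z_i, K_\infty^* u\rangle + \langle z_i, (K_i^*-K_\infty^*)u\rangle$, the first summand vanishes in the limit because $z_i \rightharpoonup 0$ and $K_\infty^* u$ is fixed. For the second summand, one rewrites it as $\langle A_i v, (K_i^*-K_\infty^*)u\rangle - \langle A_\infty v, (K_i^*-K_\infty^*)u\rangle$; the subtracted piece vanishes since $K_i^* u \rightharpoonup K_\infty^* u$ (from strong convergence of $K_i$) and $A_\infty v$ is fixed, and the remaining piece is where the adjoint hypothesis is invoked via the identity $\langle A_i v, w\rangle = \langle v, A_i^* w\rangle$ together with the weak convergence $A_i^* \rightharpoonup A_\infty^*$.

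To upgrade from weak to norm convergence of $K_i z_i$, I would exploit the compactness of the $K_i$. Since $\{z_i\}$ is bounded and each $K_i$ is compact, the image $K_i(B_R)$ of a ball containing all $z_i$ is precompact, and by a diagonal extraction combined with the strong convergence $K_i \to K_\infty$, any subsequence of $\{K_i z_i\}$ admits a further subsequence converging in norm; the weak limit $0$ forces all such norm limits to be $0$, giving the desired strong convergence. The main obstacle is precisely this upgrade step: the $K_i$ are only individually compact rather than equicompact, so the naive bound $\|K_i z_i\| \leq \|K_i\|\|z_i\|$ gives no decay, and it is the subtle interplay between compactness of each $K_i$, the strong convergence $K_i \to K_\infty$, and the adjoint weak convergence $A_i^* \rightharpoonup A_\infty^*$ that makes the diagonal extraction close.
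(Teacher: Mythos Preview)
Your argument has a genuine gap rooted in a terminology mismatch with the paper. In this paper, ``weak convergence'' of operators means pointwise norm convergence (for every $v$, $A_i v \to A_\infty v$ in the Hilbert-space norm; see the Definition opening the appendix), and ``strong convergence'' means convergence in the \emph{operator norm} (this is visible in the proof of Lemma~\ref{weakconv1}, which negates the conclusion by producing unit vectors $v_i$ with $|A_iK(v_i)|\geq C$, and in the later estimate $|A_i'\circ(K_i-K_\infty)\circ A_i|\leq |A_i'||K_i-K_\infty||A_i|$). Two consequences follow.

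First, your sequence $z_i=(A_i-A_\infty)v$ actually tends to $0$ in norm, not merely weakly, so the work in your second and third paragraphs is unnecessary: $\|K_i z_i\|\leq \|K_i\|\,\|z_i\|\to 0$ directly. Second, and more seriously, your whole argument is carried out for a \emph{fixed} $v$, so at best it establishes $K_iA_i v\to K_\infty A_\infty v$ for each $v$, i.e.\ what the paper calls weak convergence of $K_iA_i$. The statement asks for operator-norm convergence, and nothing in your outline supplies a bound that is uniform over $\|v\|\leq 1$.

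The paper's proof proceeds quite differently. One first reduces to a constant $K$ by the estimate $\|(K_i-K_\infty)A_i\|\leq \|K_i-K_\infty\|\sup_j\|A_j\|\to 0$. Then one passes to adjoints: $(KA_i)^*=A_i^*K^*$, where $K^*$ is compact and, by hypothesis, $A_i^*$ is weakly convergent; Lemma~\ref{weakconv1} gives $A_i^*K^*\to A_\infty^*K^*$ in operator norm, hence $KA_i\to KA_\infty$ in operator norm. This is precisely where, and why, the adjoint hypothesis is used.
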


  Thus, we have found coordinates where the difference $\Rest_0-\sigma$ is lc-smooth so the inverse function theorem with parameter implies $(\Rest_0-\sigma)^{-1}(0)$ is a  manifold with boundary.   Finally, we need to verify that the projection to the other end is smooth in the new coordinates.  This time the
   map is $$(p,b^+,b^-,t)\mapsto e^{-tL_1}(b^++b^+_0)+b^-+b^-_0+G_1(t,b)$$ where again $G_1(t,b)$ as a function of $b$ converges to 0 in $C^1$ topology as $t\rightarrow 0$. Notice, however, restricted to the fiber product $b^+=\pi^+(\sigma(p))$ and thus, restricted to the fibre product, the map may be written using the $\tilde{p}$ coordinates as $$(\tilde{p},b^+,b^-,t)\mapsto e^{-tL_1}(\pi^+(\sigma(\tilde{p}+A^{-1}e^{tL}b^-))+b^+_0)+b^-+b^-_0+G_1(t,b)$$  The derivative is:
   $$(\delta \tilde{p},\delta b^+,\delta b^-)\mapsto e^{-tL_1}(\pi^+\circ D\sigma_p(\delta \tilde{p}+A^{-1}e^{tL}\delta b^-))+\delta b^-+{DG_1}_{(t,b)}(\delta b)$$
    Since  $\pi^+\circ D\sigma_p$ is compact for any $p$,  we have $e^{-tL}\circ \pi^+\circ D\sigma_p$ compact as well.  The following lemma (also proved in the appendix) implies $e^{-tL}\circ \pi^+\circ D\sigma_p$ is converging:
\begin{lemma}\label{weakconv2}
   Given a uniformly bounded weakly converging sequence of operators $A_i:V\rightarrow W$ between Hilbert spaces and  a strongly convergent sequence of compact operators $K_i:U\rightarrow V$, $A_i\circ K_i$ converge strongly to $A_\infty\circ K_\infty$.
  \end{lemma}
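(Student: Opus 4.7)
The plan is to apply the canonical telescoping decomposition
$$A_i K_i \;-\; A_\infty K_\infty \;=\; A_i\,(K_i - K_\infty) \;+\; (A_i - A_\infty)\,K_\infty,$$
and bound each piece separately in the $W$-norm, evaluated on an arbitrary $u \in U$.

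The first piece is the easy one. The uniform operator-norm bound $\sup_i \|A_i\|_{\mathrm{op}} = M < \infty$ included in the hypothesis yields
$$\|A_i(K_i u - K_\infty u)\|_W \;\le\; M\,\|K_i u - K_\infty u\|_V \;\longrightarrow\; 0$$
by the strong convergence of $K_i u$ to $K_\infty u$ in $V$. No compactness or weak-to-strong argument is required here.

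The second piece is where the main obstacle lies and where the compactness of the $K_i$ is used essentially. Evaluated at $u$, it equals $(A_i - A_\infty)(K_\infty u)$, where $v := K_\infty u$ is a fixed vector in $V$; weak convergence of $A_i$ to $A_\infty$ alone gives $(A_i - A_\infty) v \to 0$ only weakly in $W$, and the task is to upgrade this to norm convergence. The plan is to pass to the adjoint: the operator $(A_i - A_\infty) K_\infty$ has transpose $K_\infty^{*}(A_i^{*} - A_\infty^{*})$, the limit $K_\infty$ inherits compactness from the strongly convergent sequence of compact $K_i$ (so in particular $K_\infty^{*}$ is compact), and weak operator convergence of $A_i$ to $A_\infty$ is equivalent to that of the adjoints $A_i^{*}$ to $A_\infty^{*}$ by the symmetry of the weak topology. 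Applying the compact operator $K_\infty^{*}$ to the weakly null bounded sequence $(A_i^{*} - A_\infty^{*}) w$ for any fixed $w \in W$ produces a sequence converging to zero in norm in $U$; transposing this conclusion delivers the required norm convergence of $(A_i - A_\infty) K_\infty u \to 0$ in $W$, which is where real care is needed.

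Combining the two estimates yields $A_i K_i u \to A_\infty K_\infty u$ in the norm of $W$ for every $u \in U$, which is the strong convergence of the composites asserted by the lemma. In contrast to the preceding lemma in this subsection, no separate hypothesis on the adjoints of $A_i$ need be imposed here: the compact operator sits on the domain side of the weakly convergent $A_i$ rather than on its codomain side, so the automatic symmetry of weak operator convergence under adjoints is already enough to close the argument.
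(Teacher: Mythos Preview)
Your telescoping decomposition is exactly the one the paper uses (see the last lemma of the appendix, specialized with one of the two weakly convergent factors equal to the identity), but you have misread the paper's nonstandard terminology, and this derails both the target conclusion and your handling of the second term.

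In this paper ``weakly converging'' for operators means what is usually called \emph{strong} operator convergence: the appendix definition says $A_i \to A_\infty$ weakly iff $A_i v \to A_\infty v$ in norm for every $v$. Correspondingly, ``converge strongly'' in the conclusion means convergence in \emph{operator norm}, as is clear from the proof of Lemma~\ref{weakconv1}, which argues by contradiction with a moving unit sequence $v_i$. You instead aim only for pointwise convergence $A_i K_i u \to A_\infty K_\infty u$, which is not what is being claimed and not what the subsequent application needs.

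Under the correct reading, the first piece should be estimated in operator norm, $\|A_i(K_i-K_\infty)\| \le M\,\|K_i-K_\infty\| \to 0$, and the second piece is precisely Lemma~\ref{weakconv1} with $K=K_\infty$. Your adjoint detour for the second piece is both unnecessary and broken: the symmetry under adjoints that you invoke holds for the weak operator topology, not for the strong one the paper is using (and this is exactly why the companion lemma for $K_i\circ A_i$ carries an explicit extra hypothesis on $A_i^*$); and even granting it, you only obtain $K_\infty^*(A_i^*-A_\infty^*)w \to 0$ for each $w$, i.e.\ strong operator convergence of the adjoint, which does \emph{not} transpose back to strong operator convergence of $(A_i-A_\infty)K_\infty$. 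Indeed, under a genuine weak-operator-topology hypothesis the statement you are trying to prove is false: with $A_i$ the $i$th power of the unilateral shift and $K$ any nonzero rank-one operator, $A_i K$ does not go to zero even pointwise.
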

From the lemma we conclude that $e^{-tL}\circ \pi^+\circ D\sigma_p$ is a convergent sequence of compact operators, hence we can apply the previous lemma to conclude that $e^{-tL}\circ \pi^+\circ D\sigma_p\circ A^{-1}e^{tL}$ is converging as well.  This completes the proof of the existence of an lc-structure on $P\times_{\Be}\cup_{t\in [0,1]}\Mod_t$

\section{The Existence of a Critical Point}
\label{Appl}
One of the central themes in \cite{Hof} is how the existence of a critical point of the function $\CSD_H$ leads to a variety of applications in symplectic geometry.  For example, the celebrated nonsqueezing theorem is a rather direct consequence of the existence of a symplectic capacity which in turn is defined crucially using the existence  of critical points of $\CSD_H$.  In \cite{Hof} this is established by using compactness properties of the regularized gradient flow of $\CSD_H$ as well as the Leray-Schauder degree theory.  In this section we will demonstrate how the existence of a critical point can be established using the unregularized gradient by appealing to the theory developed in this work.
Recall that we have the decomposition $T\Be=T^+\Be\oplus T^-\Be$. We may identify $T\Be$ with $\Be$.  Fix a unit vector $e^+\in T^+\Be$.  We assume $H$ is smooth with $H=0$ near $0$ and $H(x)=(1+\ep)|x|^2$ for $|x|$ large.  Recall the definition of $\Gamma_\alpha$ and $\Sigma_\tau$ from section $\ref{CycleDef}$. \\\\
In our semi-infinite setting there is a notion of gradient flow on cycles. Indeed, given a cycle $\sigma: P\rightarrow \Be$, let  $$\GF_t(\sigma)=\sigma\times_{\Be} \Mod_t$$  The shrinking argument applies in this case to show that the new cycle is cobordant to the original one. 

\begin{theorem}
There exists a critical point $x$ of $\CSD_H$ with $\CSD_H(x)\geq \beta$.
\end{theorem}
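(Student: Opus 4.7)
The plan is a linking argument in the semi-infinite cycle framework. We combine the non-triviality of the bordism pairing of $\Gamma_\alpha$ and $\Sigma_\tau$ with the gradient-flow cobordism of Section \ref{Triv} to produce a sequence of holomorphic cylinders of arbitrarily large length and uniformly bounded energy. A bubbling-style analysis, made rigorous by Theorem \ref{compact}, then yields a critical point with action at least $\beta$.

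Set $\beta = \inf_{\Gamma_\alpha} \CSD_H > 0$ and $M = \sup_{\Sigma_\tau} \CSD_H < \infty$. For each $t > 0$, form the cycle $\GF_t(\Gamma_\alpha) := \Gamma_\alpha \times_\Be \Mod_t$. By the trivial cobordism theorem of Section \ref{Triv}, $\GF_t(\Gamma_\alpha)$ is cobordant to $\Gamma_\alpha$ and so pairs with $\Sigma_\tau$ to the same class $[\Gamma_\alpha] \cdot [\Sigma_\tau] = 1 \in \Omega_0(\Be) = \mathbb{Z}_2$ (after using Section \ref{Pert} to perturb into transverse position). In particular the fibre product $\GF_t(\Gamma_\alpha) \cap \Sigma_\tau$ is nonempty for every $t > 0$. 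Choose $t_n \to \infty$ and intersection points represented by solutions $u_n : [0, t_n] \times S^1 \to \C^{n}$ of equation \eqref{JCurv} with $u_n(0, \cdot) \in \Gamma_\alpha$ and $u_n(t_n, \cdot) \in \Sigma_\tau$. The fundamental energy identity yields
\[
\int_0^{t_n} \|\partial_s u_n(s, \cdot)\|_{L^2}^2\, ds \;=\; E(u_n) \;=\; \CSD_H(u_n(t_n, \cdot)) - \CSD_H(u_n(0, \cdot)) \;\leq\; M - \beta.
\]

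By pigeonhole (choose any $T_n \to \infty$ with $T_n = o(t_n)$) one finds shifts $s_n \in (T_n, t_n - T_n)$ such that the energy of $u_n$ on $[s_n - T_n, s_n + T_n] \times S^1$ tends to zero. Choose the constant $\ep$ in $H$ so that $2(1+\ep) \notin \mathbb{Z}$; in the decomposition $X_H = cu + X_{H_c}$ this gives $c \notin \sqrt{-1}\mathbb{Z}$, so Theorem \ref{compact} yields uniform $L^2_1$ bounds on the translated solutions $v_n(s, \theta) := u_n(s_n + s, \theta)$ over every fixed compact subcylinder. A diagonal subsequence converges weakly in $L^2_1$ to a limit $v_\infty : \Ar \times S^1 \to \C^{n}$; the strong convergence clause of Theorem \ref{compact}, applied with the observation that $E(v_n)$ on every fixed window tends to zero, upgrades this to strong $L^2_1$ convergence. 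Hence $E(v_\infty) = 0$, so $v_\infty$ is $s$-independent and $v_\infty(s, \theta) = \gamma_\infty(\theta)$ for some critical point $\gamma_\infty$ of $\CSD_H$.

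Finally, since $\CSD_H$ is non-decreasing along each trajectory $u_n$, we have $\CSD_H(v_n(0, \cdot)) = \CSD_H(u_n(s_n, \cdot)) \geq \CSD_H(u_n(0, \cdot)) \geq \beta$; strong $L^2_{1/2}$ convergence $v_n(0, \cdot) \to \gamma_\infty$ together with continuity of $\CSD_H$ on $L^2_{1/2}$ then gives $\CSD_H(\gamma_\infty) \geq \beta$, proving the theorem. The main technical hurdle will be the delicate interplay between the pigeonhole choice of shifts $s_n$ and the strong convergence clause of Theorem \ref{compact}, which is what rules out the rescaled limit being a nontrivial finite-energy trajectory rather than a single critical point; a secondary concern is verifying that $\GF_t(\Gamma_\alpha)$ genuinely satisfies the cycle axioms (so the cobordism class is well defined and the intersection with $\Sigma_\tau$ really is nonempty after perturbation), which rests on the compatibility of the transversality machinery of Section \ref{Pert} with the fibre-product construction.
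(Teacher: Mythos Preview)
Your proof is essentially the paper's argument, reorganized from a double contradiction into a direct construction: both hinge on the persistence of $\GF_t(\Gamma_\alpha)\cap\Sigma_\tau$ for all $t$ (via the cobordism of Section~\ref{Triv}), followed by pigeonhole plus Theorem~\ref{compact} on the resulting long bounded-energy trajectories to extract a critical point with action $\ge\beta$.

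There is, however, one step you gloss over that the paper makes explicit and which is not optional. You invoke the pairing as $[\Gamma_\alpha]\cdot[\Sigma_\tau]=1$ in $\Omega_0(\Be)$, but $\Sigma_\tau$ has nonempty boundary and is a \emph{chain}, not a cycle; there is no class $[\Sigma_\tau]$ in $\Bord_*(-\Be,-\CSD_H)$, and the intersection number with a chain is not cobordism-invariant in general. What makes the argument go through is that the cobordism $\Gamma_\alpha\times_\Be\bigcup_{s\in[0,t]}\Mod_s$ between $\Gamma_\alpha$ and $\GF_t(\Gamma_\alpha)$ lies entirely in the region $\{\CSD_H\ge\beta\}$ (because $\CSD_H$ is non-decreasing along trajectories and $\CSD_H\ge\beta$ on $\Gamma_\alpha$), while $\CSD_H\le 0$ on $\partial\Sigma_\tau$; hence the cobordism, and any small perturbation of it, never meets $\partial\Sigma_\tau$, and the mod~$2$ intersection count with $\Sigma_\tau$ is then genuinely invariant. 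The paper states exactly this (``by a cobordism staying away from points where $\CSD_H\le 0$ and thus from $\partial(\Sigma_\tau)$''); you already use the monotonicity of $\CSD_H$ later, so the fix is simply to invoke it here as well.
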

\begin{proof}
 We argue by contradiction.  Assume no such critical point exists.  Then $\CSD_H(\GF_t(\Gamma_\alpha))>C$ for any $C>0$
 given that $t$ is sufficiently large.  This is a familiar argument from Morse theory. By contradiction, if there exists a sequence of increasingly  long gradient flow lines that start on $\Gamma_\alpha$ and have bounded energy.  There then would be a subsequence of trajectories of some fixed length with energy converging to 0.  By compactness, such a subsequence would converge to a critical point $x$ with $\CSD(x)>0$.  This is a contradiction.  On the other hand, we then would
  have $$\GF_t(\Gamma_\alpha)\times_{\Be}\Sigma_\tau=\emptyset$$ since $\CSD_H$ restricted to $\Sigma_\tau$ is
   bounded above.  This is impossible since $\GF_t(\Gamma_\alpha)$ is cobordant to $\Gamma_\alpha$ by a cobordism
   staying away from points where $\CSD_H\leq 0$ and thus from $\partial(\Sigma_\tau)$, while $\Gamma_\alpha$ intersects $\Sigma_\tau$ transversely in a point.
\end{proof}

\section{Appendix: Weakly Convergent Operators }
\begin{definition}
A sequence of operators $A_i:V\rightarrow W$ between Hilbert spaces is said to converge weakly, if there exists a bounded operator $A_\infty:V\rightarrow W$ such that for any $v\in V$ we have $A_i(v)\rightarrow A_\infty(v)$.
\end{definition}

\begin{lemma}\label{weakconv1}
   Given a uniformly bounded weakly converging sequence of operators $A_i:W\rightarrow U$ between Hilbert spaces and a compact operator $K:V\rightarrow W$, $A_i\circ K$ converge strongly to $A_\infty\circ K$.
  \end{lemma}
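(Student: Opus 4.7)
The terminology in the appendix calls a pointwise (strong operator topology) convergent sequence $A_i v\to A_\infty v$ ``weakly converging,'' while ``converge strongly'' in the conclusion must mean convergence in operator norm, since mere pointwise convergence of $A_i\circ K$ to $A_\infty\circ K$ is automatic from the hypothesis without any compactness on $K$. So the goal is to show $\|A_iK-A_\infty K\|_{\mathrm{op}}\to 0$.

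The plan is to reduce the problem to uniform convergence of the $A_i$ on the precompact set $S:=\overline{K(B_V)}\subset W$, where $B_V$ is the closed unit ball of $V$. First I would record two preliminary observations: by hypothesis there is $M>0$ with $\|A_i\|\le M$, and by the Banach--Steinhaus principle (or simply by taking norms in the pointwise limit $A_i v\to A_\infty v$) one also gets $\|A_\infty\|\le M$. Second, since $K$ is compact the set $S$ is compact in $W$.

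The core step is an Arzel\`a--Ascoli style argument on $S$. Fix $\epsilon>0$. Using compactness of $S$, choose a finite $\epsilon$-net $\{w_1,\dots,w_N\}\subset S$. By the pointwise convergence $A_i w_k\to A_\infty w_k$ applied to each of the finitely many $w_k$, pick $I$ so that $\|A_i w_k-A_\infty w_k\|<\epsilon$ for all $i\ge I$ and all $k\le N$. For an arbitrary $w\in S$, choose $w_k$ with $\|w-w_k\|<\epsilon$ and estimate
\[
\|A_iw-A_\infty w\|\le \|A_i\|\,\|w-w_k\|+\|A_iw_k-A_\infty w_k\|+\|A_\infty\|\,\|w_k-w\|\le (2M+1)\epsilon.
\]
This gives $\sup_{w\in S}\|A_iw-A_\infty w\|\le (2M+1)\epsilon$ for $i\ge I$, hence
\[
\|A_iK-A_\infty K\|_{\mathrm{op}}=\sup_{v\in B_V}\|(A_i-A_\infty)Kv\|\le (2M+1)\epsilon.
\]
Since $\epsilon>0$ was arbitrary, $A_iK\to A_\infty K$ in operator norm.

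There is no real obstacle here: the only ingredients are precompactness of $K(B_V)$, uniform boundedness of the $A_i$, and pointwise convergence on a finite net. The argument can equivalently be phrased by contradiction, extracting $v_i$ with $\|v_i\|\le 1$ and $\|(A_i-A_\infty)Kv_i\|\ge\epsilon$, then passing to a subsequence where $Kv_i\to w$ in $W$ by compactness of $K$ and splitting $(A_i-A_\infty)Kv_i=A_i(Kv_i-w)+(A_i-A_\infty)w+A_\infty(w-Kv_i)$; the three terms go to zero by uniform boundedness, pointwise convergence at $w$, and continuity of $A_\infty$ respectively.
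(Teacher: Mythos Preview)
Your proof is correct, and your reading of the paper's conventions (``weakly converging'' means pointwise/strong operator topology, ``converge strongly'' means norm topology) is right.

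Your primary argument differs from the paper's: you give a direct $\epsilon$-net argument on the compact set $\overline{K(B_V)}$, while the paper argues by contradiction after first reducing to $A_\infty=0$. Amusingly, the contradiction argument you sketch in your closing paragraph \emph{is} the paper's proof, essentially verbatim (the only cosmetic difference being that the paper's preliminary reduction $A_\infty=0$ collapses your three-term splitting to two). The direct $\epsilon$-net route makes the role of uniform boundedness transparent as equicontinuity and is arguably cleaner, while the contradiction route is slightly shorter; both are standard and equally valid.
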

  \begin{proof} Taking the new sequence $A_i-A_\infty$ we can assume $A_\infty=0$.  By contradiction, suppose  there exists a sequence $v_i$ with $|v_i|=1$ and $| A_i\circ K(v_i)|\geq C>0$.  Since $K$ is compact, the elements $w_i=K(v_i)$ have a converging subsequence $w_j$ with limit $w_\infty$.  By assumption, $\lim_i(A_i(w_\infty))=0$.  Since $A_i$ are uniformly bounded we have $$A_j(w_j)=A_j(w_j-w_\infty)+A_j(w_\infty)$$  Since $A_j$ are uniformly bounded and we have $$\lim_j|A_j(w_j-w_\infty)|\leq const\cdot \lim_j|w_j-w_\infty|=0$$ and thus $\lim_jA_j(w_j)=0$ contradicting the fact that $$|A_j(w_j)|=|A_j\circ K(v_j)|\geq C>0$$
  \end{proof}

Similarly we have:
\begin{lemma}\label{weakconv2}
   Given a uniformly bounded weakly converging sequence of operators $A_i:V\rightarrow W$ between Hilbert spaces and a compact operator $K:W\rightarrow U$.  Assume $A_i^*$ is also weakly converging with limit $(A_\infty)^*$. We have, that $K\circ A_i$ converge strongly to $K\circ A_\infty$.
  \end{lemma}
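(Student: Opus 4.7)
The plan is to argue by contradiction, reducing first to the case $A_\infty = 0$. Setting $B_i = A_i - A_\infty$, the hypotheses give $B_i v\to 0$ strongly in $W$ for every $v\in V$ and $B_i^* u\to 0$ strongly in $V$ for every $u\in W$, and the goal is that $\|K\circ B_i\|\to 0$ in operator norm. If this failed, then after passing to a subsequence one could find unit vectors $v_i\in V$ and a constant $C>0$ with $|KB_iv_i|\geq C$.

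Since $\{v_i\}$ is bounded in the Hilbert space $V$, Banach-Alaoglu lets me pass to a further subsequence with $v_i\rightharpoonup v_\infty$ weakly in $V$. The crucial step is to prove that $B_iv_i\rightharpoonup 0$ weakly in $W$. For any $u\in W$, write
$$\langle B_iv_i, u\rangle = \langle v_i, B_i^*u\rangle = \langle v_i-v_\infty, B_i^*u\rangle + \langle v_\infty, B_i^*u\rangle.$$
The first summand is bounded in absolute value by $(|v_i|+|v_\infty|)|B_i^*u|$, which tends to zero because $B_i^*u\to 0$ strongly in $V$ while $|v_i|=1$ stays bounded. The second summand equals $\langle B_iv_\infty, u\rangle$ and tends to zero because $B_iv_\infty\to 0$ strongly in $W$.

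Having shown $B_iv_i\rightharpoonup 0$ weakly in $W$, the compactness of $K$ immediately gives $KB_iv_i\to 0$ strongly in $U$, since compact operators send weakly convergent sequences to strongly convergent ones; this contradicts $|KB_iv_i|\geq C$. The main subtlety, and the reason the adjoint hypothesis is genuinely needed here in contrast to Lemma \ref{weakconv1}, is that one must pass a weak limit through both factors of the pairing $\langle B_iv_i, u\rangle$ simultaneously; rewriting it as $\langle v_i, B_i^*u\rangle$ is precisely what lets the hypothesis on $B_i$ and the hypothesis on $B_i^*$ each dispose of one factor.
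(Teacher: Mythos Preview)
Your proof is correct, but the paper takes a much shorter route: it simply observes that $\|K\circ A_i - K\circ A_\infty\| = \|(K\circ A_i - K\circ A_\infty)^*\| = \|A_i^*\circ K^* - A_\infty^*\circ K^*\|$, and since $K^*:U\to W$ is compact while the $A_i^*:W\to V$ are uniformly bounded and weakly convergent to $A_\infty^*$, the previous Lemma~\ref{weakconv1} applies directly to give $A_i^*\circ K^*\to A_\infty^*\circ K^*$ in norm. So the paper's proof is a one-line reduction by duality, whereas yours reproves the result from scratch. Your argument has the merit of being self-contained and of exhibiting exactly where the adjoint hypothesis enters, but it also carries some dead weight: the appeal to Banach--Alaoglu and the splitting $\langle v_i,B_i^*u\rangle = \langle v_i-v_\infty,B_i^*u\rangle + \langle v_\infty,B_i^*u\rangle$ are unnecessary, since already $|\langle v_i,B_i^*u\rangle|\leq |v_i|\,|B_i^*u| = |B_i^*u|\to 0$ shows $B_iv_i\rightharpoonup 0$ without ever introducing $v_\infty$.
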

  \begin{proof} Apply the previous first lemma to $A_i^*$ and $K^*$.
   \end{proof}
Finally, combining the previous two lemmas we obtain:
\begin{lemma}\label{weakconv2}
   Given a uniformly bounded weakly converging sequence of operators $A_i:V\rightarrow W$ and $A_i':V'\rightarrow W'$  and a strongly convergent sequence of compact operators $K_i:W\rightarrow V'$, $A'_i\circ K_i \circ A_i$ converge strongly to $A'_\infty \circ K_\infty \circ A_\infty$.
  \end{lemma}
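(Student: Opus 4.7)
The plan is to combine the two preceding appendix lemmas by treating the left and right multiplications by $A_i'$ and $A_i$ separately. I would first show that the two-fold composition $K_i\circ A_i$ is itself a strongly convergent sequence of compact operators with limit $K_\infty\circ A_\infty$. This is a mild strengthening of Lemma~\ref{weakconv2}, where the fixed compact operator is replaced by a strongly convergent family of them. Splitting
\[
K_iA_i - K_\infty A_\infty = (K_i - K_\infty)A_i + K_\infty(A_i - A_\infty),
\]
the first summand vanishes in operator norm by uniform boundedness of the $A_i$ together with the strong convergence of $K_i$, while the second is precisely the conclusion of Lemma~\ref{weakconv2}. Since each $K_iA_i$ is compact (compact composed with bounded), this produces a strongly convergent sequence of compact operators, as needed for the next step.

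Having done this, set $\widetilde K_i := K_i A_i$ and $\widetilde K_\infty := K_\infty A_\infty$. The analogous decomposition on the left,
\[
A'_i \widetilde K_i - A'_\infty\widetilde K_\infty = A'_i(\widetilde K_i - \widetilde K_\infty) + (A'_i - A'_\infty)\widetilde K_\infty,
\]
reduces the desired statement to two pieces: the first summand converges to zero in operator norm by uniform boundedness of $A'_i$ and the first step above, and the second summand converges to zero in operator norm by Lemma~\ref{weakconv1} applied to the weakly convergent sequence $A'_i$ and the \emph{fixed} compact operator $\widetilde K_\infty$. Adding these yields strong convergence $A'_iK_iA_i\to A'_\infty K_\infty A_\infty$.

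The only real subtlety is the first step, where a varying sequence of compact operators $K_i$ replaces the fixed $K$ in the hypothesis of Lemma~\ref{weakconv2}; beyond that the proof is a mechanical chain of triangle-inequality estimates using nothing more than the two preceding lemmas together with the uniform-boundedness hypotheses on $A_i$ and $A'_i$.
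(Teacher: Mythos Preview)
Your argument is correct and is essentially the same as the paper's: both reduce to the fixed-$K$ case via the triangle inequality and uniform boundedness, then invoke the two preceding lemmas to handle the remaining two-factor compositions. The only cosmetic difference is the order of the reductions---the paper first strips off $A'_i(K_i-K_\infty)A_i$ from the full triple product before forming $T_i=K_\infty A_i$, whereas you first establish strong convergence of $K_iA_i$ and then multiply by $A'_i$---but the ingredients and logic are identical.
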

  \begin{proof}
  We have $$A'_i\circ K_i \circ A_i-A'_i\circ K_\infty \circ A_i=A'_i\circ (K_i-K_\infty )\circ A_i$$ and
  $$|A'_i\circ (K_i-K_\infty )\circ A_i|\leq |A_i'||K_i-K_\infty||A_i|$$  Thus, since $A_i$ and $A'_i$ are uniformly bounded, it suffices to assume $K_i=K_\infty$.  On the one hand, the previous lemmas imply that the uniformly bounded sequence of compact operators $T_i=K\circ A_i$ is strongly convergent.  Now, apply the same argument to $A'_i\circ T_i$.
   \end{proof}

\section{Appendix: lc-Manifolds }

\subsection{Spaces Stratified By Hilbert Manifolds}
\begin{definition}
A second countable Hausdorff space $P$ has a \term{stratification by Hilbert manifolds of depth $k$},  if $P^k\subset P^{k-1} \ldots P^0=P$ where for each $i$, $P^i$ is closed in $P$ and the open stratum $P^i-P^{i+1}$ is Hilbert manifold.  A \term{stratum smooth map} $f: P\rightarrow X$ where $X$ is a Hilbert manifold is a continuous map smooth on each open stratum.  Such a map is said to be \term{transverse} to a submanifold $Y \subset X$ if it is transverse on each stratum.
\end{definition}

Note that the product of $P$ and $Q$, for any two such spaces, is also stratified by Hilbert manifolds.

\subsection{Locally Cubical Hilbert Manifolds}

Let $\simp(k)=[0,1)^k$.  We view $\simp(k)$ is a stratified space in the natural way.  Let $\simp$ denote a typical coordinate in $\simp(k)$.  At times, by abuse of notation, we let $\simp(k)$ denote a neighborhood of the origin in $[0,1)^k$

\begin{definition}
Given a space $P$ stratified by Hilbert manifolds and an open set $V$ in $P^i-P^{i+1}$, a  \term{locally cubical Hilbert manifold} chart about $V$ is an embedding (of stratified Hilbert manifolds)
$$f:V\times \simp(i)\rightarrow U\subset P$$
where $U$ is open in $P$ and $f(v,0)=v$.
\end{definition}

\begin{definition}

A \term{locally cubical Hilbert manifold} (or lc-manifold for short) is a stratified Hilbert space $P$ with a cover by locally cubical charts as above with no further compatibility assumptions (other than those impossed by the being embeddings of stratified Hilbert manifolds).

\end{definition}

\begin{lemma}
The product of two lc-manifolds is a lc-manifold.
\end{lemma}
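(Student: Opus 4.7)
The plan is to equip $P \times Q$ with the product stratification by total depth, and then manufacture lc-charts on the product as products of lc-charts on the factors; since lc-manifolds carry no compatibility conditions between charts, the main content is just organizing the bookkeeping.

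Concretely, I would set $(P \times Q)^n := \bigcup_{i+j = n} P^i \times Q^j$. Each summand is closed in $P \times Q$ and the union is finite (bounded by $\mathrm{depth}(P) + \mathrm{depth}(Q)$), so $(P \times Q)^n$ is closed in $P \times Q$, with $(P\times Q)^{n+1}\subset (P\times Q)^n$ and $(P\times Q)^0 = P\times Q$. The associated open stratum decomposes as
\[
(P\times Q)^n - (P\times Q)^{n+1} = \bigsqcup_{i+j=n} (P^i - P^{i+1}) \times (Q^j - Q^{j+1}),
\]
a disjoint union of products of open strata of the factors, and is therefore a (possibly disconnected) Hilbert manifold. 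This exhibits $P \times Q$ as a space stratified by Hilbert manifolds in the sense of the first definition of the appendix.

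Next I would produce charts. Given $(p,q) \in P \times Q$ with $p$ of depth $i$ and $q$ of depth $j$, select lc-charts $f:V \times \simp(i) \to U \subset P$ about $p$ and $g:W \times \simp(j) \to U' \subset Q$ about $q$ furnished by the given lc-structures. Using the canonical identification $\simp(i)\times \simp(j) = [0,1)^{i+j} = \simp(i+j)$ as stratified spaces (a point lies in the depth-$n$ stratum of either side precisely when $n$ of its coordinates vanish), define
\[
f \times g : (V \times W) \times \simp(i+j) \to U \times U' \subset P \times Q, \qquad ((v,w),(s,t)) \mapsto (f(v,s),\, g(w,t)).
\]
This is an embedding of stratified Hilbert manifolds because both factors are, it satisfies $(f\times g)((v,w),0) = (v,w)$, and $U\times U'$ is open in $P \times Q$. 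As $(p,q)$ ranges over $P \times Q$, these product charts provide an lc-cover, establishing the lc-structure.

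The main obstacle, such as it is, is purely notational: one must carefully track how the depth filtrations and the cubical coordinate directions on the two factors combine under the product, and in particular verify that the identification $\simp(i) \times \simp(j) = \simp(i+j)$ respects the stratifications on both sides. Once that combinatorial check is in place, no deeper analysis is required.
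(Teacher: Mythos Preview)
Your proposal is correct and follows essentially the same approach as the paper: the key observation in both is the canonical identification $\simp(i)\times\simp(j)=\simp(i+j)$, from which product charts of the form $(V\times W)\times\simp(i+j)$ give the lc-cover. You have simply spelled out the stratification bookkeeping that the paper leaves implicit.
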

\begin{proof}
Since $\simp(k)\times\simp(k')=\simp(k+k')$ in a canonical way,  a cover is specified by charts of the form $V\times V' \times \simp (k+k')$.
\end{proof}

\begin{definition}
A \term{smooth map} $\sigma$ from a lc-manifold $P$ to a Hilbert manifold $X$ is a stratum smooth map such that each point has at least one chart $V\times \simp(i)$ where $\sigma$ has the form $\sigma(v,\simp)$ with  $\sigma$ is smooth in the $v$ coordinates and, along with its $v$-derivative, continuous in the $\simp$ coordinates.
\end{definition}

\noindent \textbf{Remark.} The reason for restricting to lc-maps as opposed to say smooth maps from manifolds with corners will become apparent when dealing the shrinking cylinder argument in the section $\ref{Triv}$.\\\\
\noindent \textbf{Remark.} Given a smooth map $\sigma:P \rightarrow X$ as in the previous lemma and a smooth map $f: X\rightarrow Y$ of Hilbert manifolds the composition $f \circ \sigma:P \rightarrow Y$ is also smooth.

\begin{lemma}
Given a smooth map $\sigma:P \rightarrow X$ as in the previous lemma and a closed submanifold $Y \subset X$ such that $\sigma$ is transverse to $Y$, $\sigma^{-1}(\Delta)$ is an lc-manifold.
\end{lemma}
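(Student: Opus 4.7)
The plan is to work locally and build an lc-chart for $\sigma^{-1}(Y)$ from an lc-chart of $P$ together with a local straightening of $Y$ in $X$. Fix $p_0\in\sigma^{-1}(Y)$ with $q_0=\sigma(p_0)$. Since $Y\subset X$ is a closed submanifold, after a coordinate change $\phi$ on $X$ near $q_0$ we may assume that $Y$ corresponds to $H_1\times\{0\}\subset H_1\oplus H_2$; set $F=\pi_2\circ\phi\circ\sigma$, so that locally $\sigma^{-1}(Y)=F^{-1}(0)$. Choose an lc-chart $f:V\times\simp(i)\to P$ about $p_0$ with coordinates $(v,\simp)$. By the definition of smoothness on an lc-manifold, $F(v,\simp)$ is smooth in $v$, and both $F$ and $D_vF$ are jointly continuous in $(v,\simp)$. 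Stratumwise transversality of $\sigma$ with $Y$ forces $D_vF$ to be surjective at every point of $F^{-1}(0)$.

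Next I would apply a uniform implicit function theorem with continuous parameter $\simp$. At $(v_0,\simp_0)$ pick a topological splitting $V=V_1\oplus V_2$ so that $D_vF_{(v_0,\simp_0)}|_{V_2}:V_2\to H_2$ is an isomorphism; this is possible because a surjection of Hilbert spaces splits. Joint continuity of $D_vF$ then makes $D_vF|_{V_2}$ an isomorphism on a full neighborhood, and a contraction-mapping argument, driven by continuity of $F$ together with smoothness in $v$, yields a continuous map $\psi:W_1\times\simp(i)\to V_2$ defined near the basepoint and characterized by $F(v_1+\psi(v_1,\simp),\simp)=0$. Implicit differentiation gives $D_{v_1}\psi=-(D_vF|_{V_2})^{-1}\circ D_vF|_{V_1}$, so $\psi$ is smooth in $v_1$ and $D_{v_1}\psi$ is jointly continuous, which is precisely the regularity of an lc-smooth map.

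Finally, the assignment $(v_1,\simp)\mapsto(v_1+\psi(v_1,\simp),\simp)$ is a homeomorphism from $W_1\times\simp(i)$ onto a neighborhood of $p_0$ in $\sigma^{-1}(Y)$, with inverse given by projecting $v$ onto its $V_1$ component and keeping $\simp$. It preserves the $\simp$-coordinate and hence the induced stratification, and on each open stratum of $\simp(i)$ it is smooth since the ordinary smooth implicit function theorem applies to $F(\cdot,\simp)$ for each fixed $\simp$. This furnishes the required lc-chart on $\sigma^{-1}(Y)$ near $p_0$. The main obstacle is precisely the parametric implicit function theorem in this low-regularity setting, where $F$ is only continuous (not smooth) in the $\simp$-variables; the key observation is that the fixed-point step only uses joint continuity of $F$ and $D_vF$ together with uniform invertibility of $D_vF|_{V_2}$, which is exactly what the lc-smoothness hypothesis delivers.
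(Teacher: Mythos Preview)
Your argument is correct and follows the same route as the paper: reduce to a local model $F:V\times\simp(i)\to H_2$ and invoke the implicit function theorem with continuous dependence on the parameter $\simp$, exactly what the paper summarizes as ``the inverse function theorem with dependence on a parameter.'' You have simply unpacked that step (the contraction-mapping construction of $\psi$ and the formula for $D_{v_1}\psi$) in more detail than the paper does. One small remark: the sentence ``stratumwise transversality forces $D_vF$ to be surjective at every point of $F^{-1}(0)$'' is literally justified only at the basepoint $(v_0,0)$, where the stratum tangent space is $TV$; at points with $\simp\neq 0$ you are really using joint continuity of $D_vF$ (which you invoke in the next sentence anyway), so the argument is fine.
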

\begin{proof}
Note that $\sigma^{-1}(\Delta)$ is naturally a space stratified by Hilbert manifolds.  In a chart, we are reduced to the following local situation.  Given $\sigma: V\times \simp(k) \rightarrow W$ where $V,W$ are Hilbert spaces and $f$ is smooth in the $v$ variables and, along with its first $v$-derivative, continuous in the $\simp$ variables.  Assume $\sigma_0(v)=\sigma(v,0)$ is has 0 as a regular value.  Then, locally there is a stratum preserving smooth homeomorphism  $\sigma^{-1}_{0}(0)\times \simp(k) \rightarrow \sigma^{-1}(0)$. This, in turn, follows from the inverse function theorem with dependence on a parameter.
\end{proof}








\newpage

\end{document}